\numberwithin{equation}{section}
\newtheorem{theorem}{Theorem}[section]
\newtheorem{theo}{Theorem}[section]
\newtheorem{lem}[theorem]{Lemma}
\newtheorem{prop}[theorem]{Proposition}
\newtheorem{cor}[theorem]{Corollary}
\def\eps{\varepsilon }
\newcommand{\RR}{\mathbb{R}}
\newcommand{\NN}{{\mathbb N}}
\newcommand{\ZZ}{{\mathbb Z}}
\newcommand{\TT}{{\mathbb T}}
\def\beq{\begin{equation}}
\def\eeq{\end{equation}}
\def\bb1{{1\!\!1}}
\def\eps{\varepsilon}
\begin{document}

\title{Linear inviscid damping and enhanced viscous dissipation of  shear flows by using the conjugate operator method}

\author{Emmanuel Grenier \and Toan T. Nguyen \and Fr\'ed\'eŽric Rousset \and Avy Soffer}

\address[E. Grenier]{Equipe Projet Inria NUMED,
 INRIA Rh\^one Alpes, Unit\'e de Math\'ematiques Pures et Appliqu\'ees., 
 UMR 5669, CNRS et \'Ecole Normale Sup\'erieure de Lyon,
               46, all\'ee d'Italie, 69364 Lyon Cedex 07, France.}
               \email{Emmanuel.Grenier@ens-lyon.fr}

\address[T. Nguyen]{Department of Mathematics, Penn State University, State College, PA 16803.}
\email{nguyen@math.psu.edu}

\address[F. Rousset]{Laboratoire de Math\'ematiques d'Orsay (UMR 8628), Universit\'e Paris-Sud et Institut Universitaire de France, 91405 Orsay Cedex, France}
\email{frederic.rousset@math.u-psud.fr}

\address[A. Soffer]{Department of Mathematics, Rutgers University, New Brunswick, NJ 08903} \email{soffer@math.rutgers.edu}

\maketitle


\begin{abstract}


We study the large time behavior of solutions to two-dimensional Euler and Navier-Stokes equations linearized about shear flows of the mixing layer type in the unbounded channel $\TT \times \RR$.
 Under a simple spectral stability assumption on a self-adjoint operator, we prove 
  a local form of the  linear inviscid damping that is uniform with respect to small viscosity.
   We also prove a local form of the enhanced viscous dissipation that takes place 
  at times of  order $\nu^{-1/3},$  $\nu$ being the small viscosity. 
  To prove these results, we use a Hamiltonian approach, following the conjugate operator method developed in the study of Schr\"odinger operators,  combined with a hypocoercivity argument to handle 
   the viscous case. 

\end{abstract}




\section{Introduction}


In this paper, we are interested in the long time behavior of solutions to the two-dimensional incompressible
 Euler and  Navier Stokes equations in the vanishing viscosity limit linearized about 
near a stationary shear flow. 
More precisely, we shall study the following linearized incompressible Euler and Navier-Stokes systems respectively, 
\beq
\label{Euler1}
\partial_t v + (U_s \cdot \nabla) v + (v \cdot \nabla) U_s +\nabla p = 0, \quad \nabla \cdot v=0,
\eeq
and 
\beq \label{NS1}
\partial_t v + (U_s \cdot \nabla) v + (v \cdot \nabla) U_s +   \nabla p  - \nu \Delta v = 0, \quad \nabla \cdot v= 0, 
\eeq
for  $(x,y) \in \TT \times \RR$   and  $\nu>0$ small, 
 where $U_s$ is a smooth stationary shear flow under  the form 
\begin{equation}\label{shear}
U_s = \left( \begin{array}{c} U(y) \cr
0 \cr \end{array} \right).
\end{equation}

\bigskip

We shall study  at this linearized  level the  inviscid damping and  the  enhanced viscous dissipation. 
 In the particular case of the Couette flow, $U(y)= y$, these are well known phenomena that go back to 
observations by  Kelvin and Orr in fluid mechanics and can be justified from an explicit computation in the Fourier space.
  By denoting by $(v^1_{\alpha}, v^2_{\alpha})_{\alpha \in \mathbb{Z}}$ the Fourier coefficients
  (taking  Fourier series  in the $x$ variable) of the velocity $v=(v^1,v^2)$,  the inviscid damping  is
    the property that for $\alpha \neq 0$, and for smooth enough initial data,  we have in the large time for the solution of \eqref{Euler1}
    \beq
    \label{decayintro1} \|v^1_{\alpha} (t)\| \lesssim {1 \over \alpha t}, \qquad   \|v^2_{\alpha} (t)\| \lesssim {|\alpha | \over (\alpha t)^2}
    \eeq
    where throughout the paper $\| \cdot \|$ will stand for the $L^2$ norm in the $y\in \mathbb{R}$ variable.
     The main reason for this decay is the mixing phenomenon produced by the free transport operator
      $y \partial_{x}$.
 This property  is also true for the solution of  \eqref{NS1} uniformly with respect to $\nu$.
  The enhanced dissipation is the property that for the solution of \eqref{NS1}, we have
  \beq
  \label{decayintro2} \| \omega_{\alpha} (t)\| \lesssim e^{- \nu  t^3}
  \eeq
for $\omega_{\alpha}$ denoting the Fourier coefficients
   of the vorticity $\omega= \partial_{x}v^2 - \partial_{y} v^1$. This shows that the solution of \eqref{NS1} is damped  by  the combination of mixing and viscosity at the time scale
   $\nu^{- {1 \over 3 }}$ which is much shorter than  the viscous time scale
   $\nu^{-1}$.
    For more details, we refer to the introduction of \cite{BedMVV}. Note that   outstanding results 
    that prove that these properties are still true for solutions of  the nonlinear equations 
     close to  the Couette flow in strong enough norms have been obtained recently \cite{BedM15, BedMVV, BedVVWang}. 
The enhanced viscous dissipation makes use of the hypocoercivity of the transport-diffusion equation; see, for instance, \cite{Beck,BedCZ,Villani}. 

\bigskip
    
     The generalization of these properties to nontrivial shear flows has also received a lot of attention.
     Small in some sense perturbations of the  Couette flow  have been studied in \cite{Z16},  
      the case of  (possibly degenerate) monotonic shear flows in bounded channels
       (that is to say in $\mathbb{T} \times [0, 1]$) has been studied
        in   \cite{Zhang17a,Zhang17b} and the Kolmogorov flow that is to say the shear flow
         $U(y)= \sin y$ in a doubly periodic channel has been studied in \cite{LinXu, Zhang17c, MaeMas}.
          The case of  radial vortices has been also much studied recently  \cite{BedCZVV, Gallay, Li-Wei-Zhang}. 

\bigskip

In this paper, we shall focus on mixing layers type shear flows  in $\mathbb{T} \times \mathbb{R}$. Precisely, 
 we assume that $U$ is smooth and satisfies 
 \begin{equation}\tag{\bf H1}
  \forall ~y\in \RR, \, U'(y)>0, \quad \lim_{y\to \pm \infty} U(y) = U_\pm, \quad 
 {U''\over U} \in L^\infty,  \quad \forall ~y\in \RR, \,  {U'' \over U} (y)<0
  \end{equation}
for some constants $U_\pm$.
 Let us comment on the above  assumptions on $U''/U$. A smooth shear flow that satisfies the two first properties
 necessarily has an inflexion point.  In view of Rayleigh's inflexion point theorem, we therefore have to be careful in order to ensure its linear stability. The classical shear flows for which this can be ensured are the shear flows of the so-called
  $\mathcal{K}_{+}$ family for which we assume that there exists a unique inflexion point $y_{s}$
   and that $- U''(y)/ (U(y) -U(y_{s}))$ is bounded and positive. By changing $x$ into $x-ct$, with $c= U(y_{s})$, 
    we can always change  $U$ into $U-U(y_{s})$
     in \eqref{Euler1}  and \eqref{NS1} so that the two  last assumptions in {\bf (H1)} are verified.
   We will also make the following mild assumption. Let us set $m= (- U''/U)^{1\over 2}$  which  is well defined
    (as a real positive function)
    and smooth thanks to {\bf(H1)}. Assume that
  \begin{equation}\tag{\bf H2}  \forall k, \exists C_{k}>0, \quad  |m^{(k)}| \leq C_{k} m, \quad \lim_{|y| \rightarrow +\infty} m= 0, \quad
  m \in L^2, \quad {U'' \over U-U_{\pm}} \in L^\infty\cap L^2,
  \end{equation} 
with $m^{(k)}$ being the $k^{th}$-order derivatives of $m$. 

\bigskip

Finally, we will make an assumption that ensures the spectral stability of $U$ for \eqref{Euler1}. This means
   that it excludes the existence of nontrivial solutions of \eqref{Euler1} such that
   \beq
   \label{growingmode} \omega (t, x, y)= e^{\lambda t} e^{i \alpha x} \Omega(y), \quad \alpha \in \mathbb{Z}^*, \quad {\mbox {Re }}\lambda >0, \quad \Omega \in L^2(\mathbb{R}).
   \eeq
   Note that of course in the presence of such instabilities estimates like \eqref{decayintro1} cannot be true.
    Let us consider the Schr\"odinger operator
    \beq
    \label{Lcurldef} \mathcal{L}= - \partial_{y}^2  -  m^2, \qquad  D(\mathcal{L})= H^2(\mathbb{R})
    \eeq
    and define $\lambda_{0}$ as the infimum of the spectrum of this self-adjoint operator
     (note that because of {\bf (H2)}, its essential spectrum is $[0, + \infty[$). We assume
   \begin{equation}\tag{\bf H3}  \lambda_{0}>-1.
   \end{equation} 
  Note that this assumption is almost sharp. Indeed, if $\lambda_{0}<-1$, we get from Theorem 1.5 of
   \cite{LinSIAM} that there exist growing modes of the form \eqref{growingmode}
    for every $\alpha \in (0, \sqrt{-\lambda_{0}})$ and in particular for $\alpha = 1$ so that
    $U$ is unstable on $\mathbb{T}\times \mathbb{R}.$
    
    \bigskip
    
    The main examples of shear profiles $U(y)$ for which assumptions {\bf(H1)-(H3)} are verified are shear flows under the form
     $$U(y)= V\left({y\over L}\right)$$ where we can take $V$ under the form
     $$ V(z)=  \tanh z, \quad \mbox{or} \quad V(z)=  \int_{0}^z { 1 \over (1 + s^2)^k}\, ds$$
     for $k$ sufficiently large. Assumptions {\bf(H1)} and {\bf (H2)} are easily verified, while Assumption {\bf (H3)} is
       verified if $L$ is sufficiently large. In the case of the hyperbolic tangent the lowest eigenvalue
        of $\mathcal{L}$ is explicitly known; precisely, we have $\lambda_{0}(L)=-  {1 \over L^2}$ (the associated eigenfunction
         being $1/\cosh (y/L)$). Hence, {\bf (H3)} is verified as soon as $L>1$. Again,  this is sharp, since if $L<1$, we get from \cite{LinSIAM}
          that the mixing layer is unstable.

\bigskip      
  
  The aim of this paper is to show that for shear flows satisfying {\bf (H1)-(H3)} appropriate local versions
   of \eqref{decayintro1} and \eqref{decayintro2} hold. One of the main purposes of this paper is also  to introduce an Hamiltonian approach to prove the inviscid damping, with sharp decay in time  following the conjugate operator method, which has been well developed in the study of Hamiltonian operators; for instance, see \cite{ABG,BGS,GLS,HSS}.
    This approach  is  different from the one in \cite{Zhang17a,Zhang17b, MaeMas}  where shear flows
    in bounded channels are considered. The approach was  
     based on a direct proof of the limiting absorption principle from resolvent constructions.
      This is also different from the approach of \cite{LinXu} that  relies more on an abstract argument
       like the RAGE theorem and gives qualitative results. Here,  
       we are able to get sharp quantitative estimates. 
        Our approach will rely on a suitable symmetrized version of the linearized Euler equation
        in vorticity form that we introduce in the next section.

\bigskip
         
    The paper is organized as follows. In the two next Sections, we describe our mains results.
    Sections \ref{proofEuler} and \ref{proofNS} are devoted to the proof of the main results.
    Finally, Section \ref{sectiontech} is devoted to the proof of some technical lemmas. 

\bigskip
Throughout the paper we use the notation $\|\cdot \|$ for the $L^2(\mathbb{R})$ norm and
$\langle  \cdot, \cdot \rangle$ for the real $L^2$ scalar product:
$$  \langle f, g \rangle= \mbox{Re } \int_{\mathbb{R}} f(y) \overline{g}(y) \, dy.$$
We also use the notation $\langle A \rangle= ( 1 + A^2)^{1 \over 2}$ for symmetric operator $A=i\partial_y$ on $L^2$. In addition, for $\alpha \in \ZZ$, we write $\nabla_\alpha = (\partial_y, i\alpha)^T$ and $\Delta_\alpha = \partial_y^2 - \alpha^2$. 

\bigskip

\section{Inviscid damping}
\label{introEuler}

\subsection{Symmetric form of the equation}

 We shall work with the vorticity form of the equation \eqref{Euler1}.
 Set $\omega= \partial_{x}v^2 - \partial_{y} v^1$, then $\omega$ solves
 $$ \partial_{t} \omega + U(y)\partial_{x} \omega -  v^2 U''(y)= 0$$
  and $v^2$ can be recovered from $\omega$ by $ \Delta v^2= \partial_{x} \omega$.
  
   Let $\alpha\in \ZZ$ be the corresponding Fourier variable of $x$. Taking the Fourier transform in $x$, we rewrite the above  equation   in the Fourier space as:  
\begin{equation}\label{Euler-a} i\partial_t \omega_\alpha = \alpha L_0(y) \omega_\alpha, \qquad L_0 := U(y)  -  U''(y) \Delta_\alpha^{-1} , \end{equation}
in which $\Delta_\alpha = \partial_y^2 - \alpha^2$. When $\alpha =0$, the problem is reduced to $\partial_t \omega_\alpha =0$ and therefore no mixing occurs.  We shall thus  only consider the case when $\alpha \not =0$. We shall moreover focus on the case $\alpha >0$. 
 The case $\alpha <0$ can be handled from the same arguments as below by reversing the direction of propagation.
  It is then convenient to use a change of time scale in \eqref{Euler-a}, we set 
 \beq
 \label{change time} \omega_{\alpha} (t,y) = \tilde{\omega}_{\alpha}( \alpha t, y)
 \eeq so that dropping the tilde and the subscript $\alpha$, we obtain
   \begin{equation}\label{Euler-a'} i\partial_t \omega= L_0(y) \omega, \qquad L_0 = U(y)  -  U''(y) \Delta_\alpha^{-1}. \end{equation}
For convenience, we write $$L_0 =  U(y) \Big ( 1 + m(y)^2  \Delta_{\alpha}^{-1} \Big) , \qquad m(y):= (- U''(y)/U(y))^{1\over 2}.$$

      Let us introduce the operator 
      $$  \Sigma =  1 + m \Delta_{\alpha}^{-1} m.$$
      Note that $\Sigma$ depends on $\alpha$, through $\Delta_\alpha^{-1}$, but we omit to write this dependence explicitly.
       We observe that $\Sigma$ is  a bounded symmetric operator on $L^2$ and that $m \Delta_{\alpha}^{-1} m$ is a compact operator, upon noting that
        $m$ tends to zero at infinity thanks to {\bf(H2)}.  Moreover,  mainly thanks to {\bf(H3)}, we also have the following lemma whose proof is given in Section \ref{proofEuler}. 
        
        \begin{lem}
        \label{lemSigma}
        Assuming  {\bf(H1)-(H3)}, there exists a constant $c_{0}>0$ such that for every $\alpha \in \mathbb{Z}^*$, in the sense of symmetric operators, 
         we have
        $$ \Sigma \geq c_{0}>0.$$ 
        \end{lem}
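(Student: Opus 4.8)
The plan is to reduce the positivity of $\Sigma$ to a quadratic-form (Hardy-type) inequality that can be played directly against the operator $\mathcal{L}=-\partial_y^2-m^2$ of hypothesis {\bf (H3)}. Since $-\Delta_\alpha=-\partial_y^2+\alpha^2$ is positive and boundedly invertible on $L^2(\RR)$ and $m$ acts as a bounded real multiplication operator, for $f\in L^2(\RR)$ one rewrites the quadratic form of $\Sigma$ as
\[
\langle \Sigma f,f\rangle=\|f\|^2+\langle \Delta_\alpha^{-1}(mf),mf\rangle=\|f\|^2-\bigl\|(-\Delta_\alpha)^{-1/2}(mf)\bigr\|^2 .
\]
Hence $\Sigma\geq c_0$ is equivalent to the operator bound $\|(-\Delta_\alpha)^{-1/2}m\|_{L^2\to L^2}^2\leq 1-c_0$; passing to the adjoint $m(-\Delta_\alpha)^{-1/2}$ (same operator norm, as $m$ is real and $(-\Delta_\alpha)^{-1/2}$ positive self-adjoint) and substituting $v=(-\Delta_\alpha)^{-1/2}h$, so that $v$ ranges over $H^1(\RR)$ with $\|h\|^2=\|\partial_yv\|^2+\alpha^2\|v\|^2$, this is in turn equivalent to the form inequality
\[
\|mv\|^2\ \leq\ (1-c_0)\bigl(\|\partial_yv\|^2+\alpha^2\|v\|^2\bigr),\qquad \forall\,v\in H^1(\RR),
\]
with a constant $c_0>0$ that must be \emph{independent of $\alpha$}. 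Establishing this inequality is the whole content of the lemma.

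\textbf{Proof of the form inequality.} I would bound $\|mv\|^2$ in two ways. By {\bf (H1)}--{\bf (H2)}, $m\in L^\infty(\RR)$, so $\|mv\|^2\leq\|m\|_{L^\infty}^2\|v\|^2$. On the other hand, since $\lambda_0=\inf\operatorname{spec}\mathcal L$ and $\langle\mathcal L v,v\rangle=\|\partial_yv\|^2-\|mv\|^2$ on the form domain $H^1(\RR)$, one gets $\|mv\|^2\leq\|\partial_yv\|^2-\lambda_0\|v\|^2$; this is the only use of {\bf (H3)}. Taking the convex combination of these two bounds with weight $\theta\in[0,1]$,
\[
\|mv\|^2\ \leq\ \theta\,\|\partial_yv\|^2+\bigl(-\theta\lambda_0+(1-\theta)\|m\|_{L^\infty}^2\bigr)\|v\|^2 .
\]
Because $\alpha\in\mathbb{Z}^*$ forces $\alpha^2\geq1$, it suffices to choose $\theta$ with $\theta\leq1-c_0$ and $-\theta\lambda_0+(1-\theta)\|m\|_{L^\infty}^2\leq1-c_0$; balancing the two constraints gives the admissible choice $1-c_0=\theta=\|m\|_{L^\infty}^2/(1+\lambda_0+\|m\|_{L^\infty}^2)$, that is,
\[
c_0=\frac{1+\lambda_0}{\,1+\lambda_0+\|m\|_{L^\infty}^2\,}>0,
\]
which is positive precisely because $\lambda_0>-1$ and is manifestly independent of $\alpha$.

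\textbf{Main obstacle.} The preparatory facts — that $\Sigma$ is a well-defined bounded symmetric operator with $m\Delta_\alpha^{-1}m$ compact (already noted in the text, from $m\to0$ at infinity), that the manipulations with $(-\Delta_\alpha)^{-1/2}$ and the duality step are legitimate (routine, since $(-\Delta_\alpha)^{-1/2}\colon L^2\to H^1$ is bounded), and that $\langle\mathcal L v,v\rangle\geq\lambda_0\|v\|^2$ holds on $H^1(\RR)$ — are all straightforward. The one genuinely delicate point, and the reason hypothesis {\bf (H3)} is stated as $\lambda_0>-1$ rather than $\lambda_0>0$, is the uniformity in $\alpha$: when $-1<\lambda_0\leq0$ the bound $\|mv\|^2\leq\|\partial_yv\|^2-\lambda_0\|v\|^2$ is by itself insufficient, and the deficit must be absorbed by the term $\alpha^2\|v\|^2\geq\|v\|^2$ on the right-hand side of the form inequality — the same positivity that rules out the growing modes \eqref{growingmode} for $|\alpha|\geq1$. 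Getting this interplay right is the crux; everything else is bookkeeping.
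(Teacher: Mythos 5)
Your proof is correct, and it takes a genuinely different route from the paper's. The paper argues softly: it notes that $m\Delta_\alpha^{-1}m$ is compact so $\sigma_{ess}(\Sigma)=\{1\}$, and then rules out any eigenvalue $\lambda\le 0$ by contradiction — setting $u=\Delta_\alpha^{-1}m\psi$, deriving $-\lambda(-\Delta_\alpha u)+(\mathcal L+\alpha^2)u=0$, and pairing with $u$ to get $-\lambda\|\nabla_\alpha u\|^2+(\lambda_0+\alpha^2)\|u\|^2\le 0$, which forces $u=0$ since $\lambda_0+\alpha^2>0$. You instead prove the quadratic-form inequality $\|mv\|^2\le(1-c_0)(\|\partial_y v\|^2+\alpha^2\|v\|^2)$ directly by interpolating between the trivial bound $\|mv\|^2\le\|m\|_{L^\infty}^2\|v\|^2$ and the spectral bound $\|mv\|^2\le\|\partial_y v\|^2-\lambda_0\|v\|^2$. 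The core mechanism is the same in both arguments — the variational inequality $\langle\mathcal L v,v\rangle\ge\lambda_0\|v\|^2$ from {\bf (H3)} played against $\alpha^2\ge 1$ — but your version buys an explicit constant $c_0=(1+\lambda_0)/(1+\lambda_0+\|m\|_{L^\infty}^2)$ that is manifestly uniform in $\alpha$, which is exactly what the lemma asserts; the paper's contradiction argument gives positivity for each fixed $\alpha$ but leaves the uniformity of $c_0$ implicit (it would require an extra remark such as $\|m\Delta_\alpha^{-1}m\|\lesssim\alpha^{-2}$ for large $|\alpha|$). Your approach also dispenses entirely with the compactness of $m\Delta_\alpha^{-1}m$ and the essential-spectrum discussion. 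All the reductions you perform (passing to the adjoint, the substitution $v=(-\Delta_\alpha)^{-1/2}h$ with range $H^1$, the extension of the spectral bound to the form domain) are legitimate as stated.
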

        
   We can thus write $\Sigma = S^2$ for some bounded symmetric coercive operator $S$ on $L^2$. Moreover, we also have that $S- 1$ is compact since
   $$ S-1= m \Delta_{\alpha}^{-1} m  ( 1 +  S)^{-1}.$$
    By setting 
    $ \omega= m S^{-1}  \psi$, we finally  find  
  \begin{equation}\label{symm-vort} 
  i \partial_{t} \psi =  H \psi, \quad  H=  S U(y) S, \quad S= \left (1 + m \Delta_{\alpha}^{-1} m \right)^{1 \over 2}
  \end{equation} 
      with the initial condition $\psi_{/t=0}= \psi_{0}= Sm^{-1} \omega_{0}$. Note that 
      we will always assume that  $\psi_{0} \in L^2$, which in terms of $\omega_{0}$ means that 
      $\omega_{0}$ is decaying sufficiently fast so that
        $ {1 \over m }\omega_{0} \in L^2$ 
       
     We also point out that $H$ is a bounded symmetric operator on $L^2$ and that $H$ actually depends 
      on $\alpha$ in a smooth way (since we focus on $|\alpha | \geq 1$). We omit this dependence for notational convenience. All the estimates that we shall give in the following
      are uniform with respect to $\alpha$.

\bigskip

\subsection{Spectral properties of H and conjugate operator} Let $ \sigma(H)$ be the spectrum of $H$ on $L^2$. 
The first useful property is that: 
\begin{lem}
\label{lemspectrum}
Assuming {\bf(H1)-(H3)}, we have $ \sigma(H)= [U_{-}, U_{+}]$ and there is no embedded eigenvalue in $[U_{-}, U_{+}]$.
\end{lem}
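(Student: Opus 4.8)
The plan is to pin down the essential spectrum by a compact perturbation argument and then to exclude \emph{all} eigenvalues by reducing the eigenvalue problem to the Rayleigh equation.

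First, since $S-1$ is compact (as noted above) and $U(y)$ is a bounded multiplication operator, the identity
\[
H - U(y) \;=\; S\,U(y)\,(S-1) \;+\; (S-1)\,U(y)
\]
shows that $H - U(y)$ is compact, so by Weyl's theorem $\sigma_{\mathrm{ess}}(H) = \sigma_{\mathrm{ess}}(U(y))$. Since $U$ is continuous, strictly increasing and tends to $U_\pm$, multiplication by $U$ on $L^2(\mathbb{R})$ has spectrum equal to its essential range $[U_-,U_+]$ and no eigenvalue; hence $\sigma_{\mathrm{ess}}(H) = [U_-,U_+]$ and, in particular, $[U_-,U_+]\subseteq \sigma(H)$.

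Next I would rule out eigenvalues. Suppose $H\psi = \lambda\psi$ with $\lambda\in\mathbb{R}$ and $\psi\neq 0$. By Lemma~\ref{lemSigma}, $S$ is coercive, so $w := S\psi \in L^2$ is also nonzero, and a short computation (apply $S^{-1}$, then $\Sigma = S^2$) turns the equation into $\Sigma U(y) w = \lambda w$, that is,
\[
(U(y)-\lambda)\,w \;=\; -\,m\,\phi, \qquad \phi := \Delta_\alpha^{-1}\big(m\,U(y)\,w\big) \in H^2(\mathbb{R}).
\]
When $\lambda\notin[U_-,U_+]$ the factor $U-\lambda$ never vanishes, so $w = -m\phi/(U-\lambda)$, and using $m^2U = -U''$ one finds that $\phi$ solves the Rayleigh equation $\phi'' - \alpha^2\phi = \frac{U''}{U-\lambda}\phi$ on $\mathbb{R}$. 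When $\lambda\in(U_-,U_+)$, let $y_\lambda$ be the unique point with $U(y_\lambda)=\lambda$; then $w\in L^2$ near $y_\lambda$ together with $w = -m\phi/(U-\lambda)$ and $U-\lambda\sim U'(y_\lambda)(y-y_\lambda)$ forces $\phi(y_\lambda)=0$, the Rayleigh equation still holds a.e., and its right-hand side extends continuously across $y_\lambda$, so $\phi\in C^2$ near $y_\lambda$. In every case $\phi\not\equiv 0$, for otherwise $(U-\lambda)w\equiv 0$ and hence $\psi\equiv 0$.

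Finally I would apply Howard's substitution $\phi = (U-\lambda)G$. The vanishing $\phi(y_\lambda)=0$ guarantees that $G$ and $(U-\lambda)G'$ are locally square integrable across the critical layer and decay at $\pm\infty$, and a direct computation recasts the Rayleigh equation as $\big[(U-\lambda)^2 G'\big]' = \alpha^2 (U-\lambda)^2 G$. Pairing with $\bar G$, integrating over $\mathbb{R}$ and integrating by parts (the boundary terms vanish) gives
\[
-\int_{\mathbb{R}} (U-\lambda)^2\,|G'|^2\,dy \;=\; \alpha^2\int_{\mathbb{R}} (U-\lambda)^2\,|G|^2\,dy,
\]
and since $\alpha\neq 0$ and $(U-\lambda)^2>0$ almost everywhere, both sides must vanish, forcing $G\equiv 0$, hence $\phi\equiv 0$ — a contradiction. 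Thus $H$ has no eigenvalue; combined with the first step, $\sigma(H)\setminus\sigma_{\mathrm{ess}}(H)$, which can only consist of isolated eigenvalues, is empty, so $\sigma(H) = [U_-,U_+]$, and the absence of eigenvalues is precisely the absence of embedded eigenvalues. The only real obstacle is the critical layer when $\lambda\in(U_-,U_+)$: one must extract $\phi(y_\lambda)=0$ from the mere membership $w\in L^2$ (the function $w$ itself need not be continuous), and then verify that $G=\phi/(U-\lambda)$ is regular enough that Howard's manipulation and the integration by parts are legitimate; the coercivity of $\Sigma$ furnished by Lemma~\ref{lemSigma} (ultimately by {\bf (H3)}) is what makes the reduction $H\psi=\lambda\psi\Leftrightarrow\Sigma U(y)w=\lambda w$ available in the first place.
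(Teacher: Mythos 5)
Your identification of the essential spectrum and your exclusion of eigenvalues for $\lambda\notin[U_-,U_+]$ and for $\lambda\in(U_-,U_+)$ are correct, and the route is genuinely different from the paper's. The paper excludes eigenvalues in the style of Sattinger: from the Rayleigh equation it extracts a \emph{positive} ground state $v$ of $-\partial_y^2+U''/(U-c)$ with eigenvalue $\le -\alpha^2$, rewrites the equation in divergence form and integrates to get $\lambda\int_{\RR}(U-c)v=0$, a sign contradiction; in the interior case it first shows $\phi(y_c)\neq 0$ by running this argument on the two half-lines, then deduces $U''(y_c)=0$, hence $c=0$ by {\bf(H1)}, and finally rules out $c=0$ via the invertibility of $S$. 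You instead use Howard's substitution, and --- this is the nice point --- you extract $\phi(y_\lambda)=0$ directly from the $L^2$ membership of the symmetrized eigenfunction $w=S\psi$, using that $m>0$ everywhere by {\bf(H1)}. This makes $G=\phi/(U-\lambda)$ of class $C^1$ across the critical layer (one does need to check, as you indicate, that $\phi\in C^2$ near $y_\lambda$ and that the cancellation in $(U-\lambda)\phi'-U'\phi$ is of order $(y-y_\lambda)^2$; both hold), so the integration by parts is legitimate and the sign argument closes the interior case in one stroke, with no half-line analysis and no separate treatment of $c=0$.

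However, your case division has a hole at the edges: $\lambda=U_\pm$ lies in $[U_-,U_+]$ but not in $(U_-,U_+)$, and is not ``$\notin[U_-,U_+]$'' either, so these two values are never treated --- and they are exactly where Howard's argument degenerates. For $\lambda=U_+$, say, $U-\lambda$ never vanishes on $\RR$ but tends to $0$ at $+\infty$, so $G=\phi/(U-U_+)$ may blow up there: neither the finiteness of $\int(U-\lambda)^2|G'|^2$ (which amounts to $\frac{U'}{U-U_+}\,\phi\in L^2$ near $+\infty$) nor the vanishing of the boundary term $(U-\lambda)^2G'\bar G$ at $+\infty$ follows from $\phi\in H^2$ alone. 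This is not an artifact: the paper devotes its Case 2 precisely to these values and there invokes the hypothesis $U''/(U-U_\pm)\in L^\infty\cap L^2$ from {\bf(H2)}, which your proof never uses. To close the gap you should either fall back on the divergence-form/positive-ground-state computation (for which it suffices that $U-U_\pm$ keeps a constant sign on all of $\RR$), or establish enough decay of $\phi$ relative to $U-U_\pm$ at the relevant infinity; as written, eigenvalues at the spectral edges are not excluded.
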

Again the proof of Lemma \ref{lemspectrum} will be given in Section \ref{proofEuler}.
To exclude eigenvalues and embedded eigenvalues we will  adapt   the arguments of \cite{LinSIAM, LinXu, Sattinger} for  the Rayleigh equation in bounded domains.

As an immediate Corollary,  we get from  the abstract RAGE Theorem that
\begin{cor}
 For any compact operator $C$ on $L^2(\mathbb{R})$, there holds
 $$  \lim_{T\rightarrow + \infty}{1 \over T} \int_{0}^T \|C e^{-i t H} \psi_{0} \|^2 \, dt = 0$$
 for any $\psi_{0} \in L^2$.
 
\end{cor}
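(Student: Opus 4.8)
The plan is to deduce the statement directly from the abstract RAGE theorem, using only the spectral information recorded in Lemma \ref{lemspectrum}. Recall RAGE in the following form: if $A$ is a bounded self-adjoint operator on a Hilbert space $\mathcal{H}$ and $\mathcal{H}_{\mathrm c}(A)$ denotes its continuous spectral subspace --- the orthogonal complement of the closed span of all eigenvectors of $A$ --- then for every compact operator $C$ on $\mathcal{H}$ and every $\phi\in\mathcal{H}_{\mathrm c}(A)$,
$$ \lim_{T\to+\infty}\frac1T\int_0^T \bigl\| C\, e^{-itA}\phi\bigr\|^2\,dt = 0 ; $$
see e.g.\ \cite{ABG}. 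I would recall its proof only to the extent of noting the two ingredients: (i) since $C$ is a norm limit of finite rank operators, it suffices to treat $C=\langle\,\cdot\,,f\rangle\,g$, i.e.\ to prove $\frac1T\int_0^T|\langle e^{-itA}\phi,f\rangle|^2\,dt\to0$; (ii) the integrand equals $|\widehat{\mu}(t)|^2$, where $\mu$ is the complex spectral measure $B\mapsto (E_A(B)\phi,f)$ of $A$ between $\phi$ and $f$ (and $E_A$ its spectral resolution), and by Wiener's theorem the Ces\`aro limit of $|\widehat{\mu}(t)|^2$ equals $\sum_{\lambda\in\mathbb{R}}|\mu(\{\lambda\})|^2$, which vanishes because $\phi\in\mathcal{H}_{\mathrm c}(A)$ makes $\mu$ atomless.

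I would then apply this with $A=H$ and $\mathcal{H}=L^2(\mathbb{R})$. By Lemma \ref{lemspectrum} we have $\sigma(H)=[U_-,U_+]$ and $H$ has no eigenvalue in $[U_-,U_+]$; since $\sigma_{\mathrm{pp}}(H)\subset\sigma(H)\subset[U_-,U_+]$, it follows that $H$ has no eigenvalue at all, hence $\mathcal{H}_{\mathrm c}(H)=L^2(\mathbb{R})$. Consequently the condition ``$\phi\in\mathcal{H}_{\mathrm c}(A)$'' is met by \emph{every} $\psi_0\in L^2$, and the claim of the Corollary is exactly the displayed RAGE limit taken with $\phi=\psi_0$.

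The Corollary itself presents essentially no obstacle --- it is a black-box application of RAGE. The entire content lies upstream, in Lemma \ref{lemspectrum}: proving that $H=SU(y)S$ has purely continuous spectrum, with no eigenvalue embedded in $(U_-,U_+)$ and none at the endpoints $U_\pm$. That is where the structure of the shear flow and the assumptions {\bf (H1)}--{\bf (H3)} enter (the coercivity of $\Sigma$ from Lemma \ref{lemSigma}, Rayleigh-equation arguments in the spirit of \cite{LinSIAM,LinXu,Sattinger}, and the exclusion of growing modes of the form \eqref{growingmode}); once that lemma is granted, the present Corollary follows in one line.
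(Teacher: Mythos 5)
Your proposal is correct and matches the paper exactly: the Corollary is stated there as an immediate consequence of the abstract RAGE theorem, with Lemma \ref{lemspectrum} supplying the absence of eigenvalues so that the continuous subspace is all of $L^2$. Your additional sketch of the Wiener-theorem proof of RAGE is more detail than the paper provides, but the route is the same.
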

In terms of the original vorticity function $\omega$, since $\omega = m S^{-1} \psi(t)$ with $S$ being a bounded operator, we observe that for every $\eps>0$, 
 the operator $C = \langle i\partial_y\rangle^{- \eps} m S^{-1}$ is compact on $L^2$, and hence the above result gives
$$   \lim_{T\rightarrow + \infty}{1 \over T} \int_{0}^T \|  \langle i\partial_y\rangle^{- \eps} \omega(t) \|^2 \, dt = 0$$
 for every $\omega_{0}$ such that ${1 \over m}\omega_{0} \in L^2$,  where $\omega$ solves  \eqref{Euler-a'}. In particular, this yields  some
 sort of time decay for the velocity.

We shall now  use the conjugate operator method to get  quantitative and more precise versions of this result.
We will use $ A =  i  \partial_{y}$ as a conjugate operator in order to exploit that $U'>0$.
 Note that $A$ is a symmetric operator on $L^2$.
   
   Observe that $ U: \mathbb{R} \rightarrow ]U_{-}, U_{+}[$ is a diffeomorphism. We can thus define a smooth function  $F$ on $]U_{-}, U_{+}[$  by
     $F(u)= U'( U^{-1}(u))$. This yields $F(U(y))= U'(y),$ for all $y \in \mathbb{R}$.   Note that for every compact interval $I \subset ]U_{-}, U_{+}[ $, there exists $\theta_{I}>0$
      such that $F \geq \theta_{I} $ on $I$.
    
      The crucial property that we will prove in Section \ref{proofEuler} is the following: 
      \begin{lem}
      \label{lemmourre} Assume {\bf (H1)-(H3)}.
       For every compact interval  $I \subset ]U_{-}, U_{+}[$,  there exists 
        a compact operator $K$ such that  for every  $g \in \mathcal{C}^\infty_{c} (]U_{-}, U_{+}[, \mathbb{R}_{+})$ with   the support contained in $I$, there holds
       \beq
       \label{estmourre}  g(H) i [H, A] g(H) \geq \theta_{I} g(H)^2 + g(H) K g(H)\eeq
       where $A = i\partial_y$, $[H,A] = HA - AH$, $\theta_{I}= \min_{I} F(u)>0$, and 
       $g(H)$ is defined through the usual functional calculus. 
      \end{lem}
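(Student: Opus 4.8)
The plan is to reduce the commutator $i[H,A]$ to a principal term that is essentially multiplication by $F(U(y))=U'(y)$ plus compact errors, and then use the functional calculus to replace this multiplication operator by its values on the spectral support of $g(H)$. Recall $H=SU(y)S$ with $A=i\partial_y$, and write $S=1+R$ where $R=S-1$ is compact (as noted after Lemma \ref{lemSigma}). First I would expand
\[
i[H,A] = i[SU(y)S,\,A] = S\,i[U(y),A]\,S + i[S,A]\,U(y)S + SU(y)\,i[S,A].
\]
The middle factor of the first term is $i[U(y),i\partial_y] = U'(y)$, so the leading piece is $S\,U'(y)\,S = S F(U(y)) S$. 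The two remaining terms involve $i[S,A]$; the key point is that this commutator is compact. Indeed $S=(1+m\Delta_\alpha^{-1}m)^{1/2}$, and using the integral representation of the square root together with $i[\Delta_\alpha^{-1},A]=0$ (since $A=i\partial_y$ commutes with $\Delta_\alpha$), one finds $i[S,A]$ is built from $i[m,A]=m'(y)$, $m$, and bounded functions of $m\Delta_\alpha^{-1}m$; since $m,m'\to 0$ at infinity by {\bf(H2)} and $m\Delta_\alpha^{-1}m$ is compact, $i[S,A]$ is compact, uniformly in $\alpha$. Hence the last two displayed terms, and also $S F(U) S - F(U)$ modulo compact (again because $S-1$ is compact and $F(U)$ is bounded), are all compact. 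Writing $\Sigma_0$ for the sum of these compact pieces, we get
\[
i[H,A] = F(U(y)) + \Sigma_0, \qquad \Sigma_0 \text{ compact}.
\]

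Next I would conjugate by $g(H)$. Since $g\in\mathcal C^\infty_c(]U_-,U_+[,\RR_+)$ is supported in the compact interval $I$ and $\sigma(H)=[U_-,U_+]$ with no embedded eigenvalues (Lemma \ref{lemspectrum}), the functional calculus gives
\[
g(H)\, i[H,A]\, g(H) = g(H)\,F(U(y))\,g(H) + g(H)\,\Sigma_0\,g(H).
\]
The second term is of the form $g(H)Kg(H)$ with $K:=\Sigma_0$ compact and independent of $g$ (it depends only on $H$, hence on $\alpha$, but not on the choice of $g$ with support in $I$), which is exactly what the statement allows. For the first term, the aim is the lower bound $g(H)F(U)g(H)\ge \theta_I\, g(H)^2$ modulo a further compact error that can be absorbed into $K$. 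The natural way is to exploit that $F(U(y))\ge \theta_I$ precisely on $U^{-1}(I)$: the obstruction is that $F(U(y))$ as a multiplication operator is not controlled by $g(H)$ unless we know $g(H)$ "localizes in $y$ near $U^{-1}(I)$," which is false in general. The clean fix is to first compare $F(U(y))$ with $F(H)$.

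The crucial step, and the one I expect to be the main obstacle, is therefore to show that
\[
g(H)\,\big(F(U(y)) - F(H)\big)\,g(H) \quad\text{is compact.}
\]
Granting this, one finishes immediately: $F(H)\ge \theta_I$ on the range of $g(H)$ by the spectral theorem (since $g$ is supported in $I$ and $\min_I F=\theta_I$), so $g(H)F(H)g(H)\ge \theta_I g(H)^2$, and the difference $g(H)F(U(y))g(H)-g(H)F(H)g(H)$ gets folded into the compact operator $K$. To prove the displayed compactness, write $F(U(y))-F(H) = F(U(y)) - F(SU(y)S)$; since $F$ is smooth on a neighborhood of $I$ one can use an almost-analytic extension / Helffer--Sj\"ostrand formula to express $F(H)-F(U(y))$ through the resolvent difference
\[
(z-H)^{-1} - (z-U(y))^{-1} = (z-H)^{-1}\big(H - U(y)\big)(z-U(y))^{-1},
\]
and $H-U(y) = SU(y)S - U(y) = R\,U(y)S + U(y)R$ is compact because $R=S-1$ is compact and $U(y)$ is bounded. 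Integrating against the almost-analytic extension of $F$ (cut off to a neighborhood of $I$, which is legitimate after conjugating by $g(H)$) shows $g(H)(F(H)-F(U(y)))g(H)$ is a norm-limit of compacts, hence compact; uniformity in $\alpha$ follows since all bounds (on $\|R\|$, $\|i[S,A]\|$, and the resolvents localized by $g$) are uniform in $|\alpha|\ge 1$. Collecting the three compact contributions into a single compact $K$ yields \eqref{estmourre}.
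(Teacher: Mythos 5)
Your first half — expanding $i[H,A]=S\,U'\,S+(\text{terms in }i[S,A])$ and reducing to $i[H,A]=F(U(y))+\Sigma_0$ with $\Sigma_0$ compact — is exactly the paper's first step (the paper expands $H=U+(S-1)U(S-1)+(S-1)U+U(S-1)$ instead, but this is the same computation), and your justification that $i[S,A]$ is compact via $[m,A]=-im'$, the decay of $m,m'$, and the square-root functional calculus is a correct filling-in of a detail the paper leaves implicit.

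The second half diverges from the paper and contains a gap at the step you yourself flag as the main obstacle. You want $g(H)\bigl(F(U(y))-F(H)\bigr)g(H)$ compact via Helffer--Sj\"ostrand, and you propose to cut $F$ off to a neighborhood of $I$, claiming this is ``legitimate after conjugating by $g(H)$.'' That is legitimate for the $F(H)$ factor, since $g(H)F(H)g(H)=g(H)(F\chi)(H)g(H)$ when $\chi=1$ near $\mathrm{supp}\,g$; and the cutoff is genuinely needed, because $F$ is only guaranteed smooth on the open interval $]U_-,U_+[$ (for the algebraically decaying profiles allowed by {\bf(H1)--(H3)}, $F$ is typically not $\mathcal{C}^\infty$ at the edges $U_\pm$), so a direct almost-analytic extension of $F$ near $\sigma(H)=[U_-,U_+]$ is unavailable. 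But the cutoff is \emph{not} harmless for the multiplication operator $F(U(y))$: $g(H)$ localizes in the spectrum of $H$, not in the values of $U(y)$, so $g(H)F(U)g(H)\neq g(H)(F\chi)(U)g(H)$, and you must separately show that the discrepancy $g(H)\,(F(1-\chi))(U)\,g(H)$ is compact. Doing so requires inserting a fattened cutoff $\tilde g$ with $\tilde g=1$ on $\mathrm{supp}\,g$ and $\mathrm{supp}\,\tilde g\subset\{\chi=1\}$, writing $g(H)=g(H)\tilde g(H)$, replacing $\tilde g(H)$ by $\tilde g(U)$ modulo the compact operator $\tilde g(H)-\tilde g(U)$ (Lemma \ref{lemtech2}\,(ii)), and using $\tilde g(U)(F(1-\chi))(U)=0$. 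This is precisely the device the paper uses — and the paper uses it to avoid forming $F(H)$ altogether: it bounds $\tilde g(U)\,i[H,A]\,\tilde g(U)\geq\theta_I\,\tilde g(U)^2+\tilde g(U)K_1\tilde g(U)$ as a pure multiplication-operator inequality and then trades $\tilde g(U)$ for $\tilde g(H)$ up to compacts. So your route is viable and yields the same conclusion, but the one step you left as a remark is exactly where the paper's key insertion of $\tilde g(U)$ versus $\tilde g(H)$ is unavoidable; as written, the proof is incomplete there.
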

       The above lemma is also true with  $g(H) = \mathrm{1}_{I}(H)$ the spectral projection onto $I$. We have stated the estimate in this way since it will be the one that
is  the most useful for us. 
        Note that in our simple setting, the commutator $ [H, A] $  and the higher iterates are bounded operators. 
       
       This localized commutator estimate was introduced in \cite{Mourre} and is well known to have many interesting consequences on the structure of the spectrum of $H$. Since we know from Lemma \ref{lemspectrum}
       that there are no  eigenvalues, we can get for example from \cite{Mourre, GerardC, ABG} that the limiting absorption principle holds
        for every interval $I \subset ]U_{-}, U_{+}[$ and that there is no singular continuous spectrum.
         Note that when $m \in L^2$, the operator $m \Delta_{\alpha}^{-1}m$ is in the trace class. This follows directly from the expression of the kernel which is given by 
         $$\mathcal{K}(y_{1}, y_{2})=  {1\over |\alpha|} e^{ - |\alpha| \,| y_{1} - y_{2}|  } m(y_{1}) m(y_{2}).$$
Thus, $H$ is a trace class perturbation of the multiplication operator by $U$. In addition, it follows from Kato's Theorem \cite{Kato}  that the continuous spectrum of $H$ is $\sigma_{ac}(H)= [U_{-}, U_{+}]$. By combining these facts, we get, again 
           from the Kato's theorem,  that
           the wave operators exist and are complete, which in particular implies the following 
           scattering result:
       \begin{cor} Assuming  {\bf (H1)-(H3)}, 
        for every $\psi_{0}\in L^2$, there exists $\psi_{+} \in L^2$ such that
           $$ \lim_{t \rightarrow + \infty} \| e^{-it H} \psi_{0} - e^{-it U(y)} \psi_{+}\| = 0.$$ 
           \end{cor}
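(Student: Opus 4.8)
The plan is to realise $H$ as a trace class perturbation of the operator of multiplication by $U(y)$ and then to read the scattering statement off the Kato--Rosenblum theorem, using the spectral information already at our disposal.

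The first thing to check is that $H$ minus multiplication by $U(y)$ is trace class. Since $-\Delta_{\alpha}=-\partial_y^2+\alpha^2\ge \alpha^2>0$, the operator $m(-\Delta_{\alpha})^{-1/2}$ is Hilbert--Schmidt: its integral kernel is $m(y_1)$ times the convolution kernel of $(-\Delta_{\alpha})^{-1/2}$, and the latter is the inverse Fourier transform of $(\xi^2+\alpha^2)^{-1/2}\in L^2(\RR)$, so the Hilbert--Schmidt norm is controlled by $\|m\|_{L^2}$, which is finite by {\bf(H2)} (equivalently one reads the trace class property of $m\Delta_{\alpha}^{-1}m$ off the explicit kernel $\mathcal K$ displayed above). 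Hence $m\Delta_{\alpha}^{-1}m=-\,m(-\Delta_{\alpha})^{-1/2}\cdot(-\Delta_{\alpha})^{-1/2}m$ is a product of two Hilbert--Schmidt operators, hence trace class; and since $S-1=m\Delta_{\alpha}^{-1}m\,(1+S)^{-1}$ with $(1+S)^{-1}$ bounded, $S-1$ is trace class as well. As $U\in L^\infty$, the identity
$$ H-U(y)=S\,U(y)\,S-U(y)=S\,U(y)\,(S-1)+(S-1)\,U(y) $$
then exhibits $H-U(y)$ as a sum of bounded operators times the trace class operator $S-1$, hence trace class --- this is exactly the assertion recorded just before the corollary.

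Next I would analyse the unperturbed flow. By {\bf(H1)}, $U:\RR\to\,]U_-,U_+[\,$ is a smooth increasing diffeomorphism, so the change of variables $u=U(y)$, under which $dy=du/F(u)$ (with $F$ as introduced before Lemma \ref{lemmourre}), conjugates multiplication by $U(y)$ to multiplication by $u$ on $L^2(\,]U_-,U_+[\,;\,du/F(u))$; this operator is purely absolutely continuous with spectrum $[U_-,U_+]$, so its singular part is trivial and $e^{-itU(y)}$ acts simply as multiplication by the unimodular function $e^{-itU(y)}$. Then, $H$ being a trace class perturbation of multiplication by $U(y)$, the Kato--Rosenblum theorem \cite{Kato} yields that the wave operators $W_\pm=\lim_{t\to\pm\infty}e^{itH}e^{-itU(y)}$ (strong limits) exist and are complete, i.e. $\Range W_\pm=\mathcal H_{ac}(H)$. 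By Lemma \ref{lemspectrum}, $H$ has spectrum $[U_-,U_+]$ with no eigenvalue therein, while the Mourre estimate of Lemma \ref{lemmourre}, valid on every compact subinterval of $\,]U_-,U_+[\,$, excludes singular continuous spectrum; hence $\mathcal H_{ac}(H)=L^2$. For $\psi_0\in L^2$, completeness then provides $\psi_+\in L^2$ with $W_+\psi_+=\psi_0$, that is $\|e^{itH}e^{-itU(y)}\psi_+-\psi_0\|\to 0$; multiplying inside the norm by the unitary $e^{-itH}$ gives $\|e^{-itU(y)}\psi_+-e^{-itH}\psi_0\|\to 0$ as $t\to+\infty$, which is the asserted convergence.

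There is no real conceptual obstacle here; the points demanding care are the bookkeeping of the trace class reduction --- in particular verifying that $m(-\Delta_{\alpha})^{-1/2}$ is Hilbert--Schmidt, which is where $m\in L^2$ from {\bf(H2)} is genuinely used --- and making sure that $P_{ac}(H)=\Id$, which relies on the complete absence of point spectrum from Lemma \ref{lemspectrum} (including at the endpoints $U_\pm$) together with the absence of singular continuous spectrum furnished by the Mourre estimate.
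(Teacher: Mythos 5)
Your proof is correct and follows essentially the same route as the paper: exhibit $H$ as a trace class perturbation of multiplication by $U(y)$, invoke the Kato--Rosenblum theorem for existence and completeness of the wave operators, and use the absence of eigenvalues (Lemma \ref{lemspectrum}) together with the absence of singular continuous spectrum (from the Mourre estimate) to conclude $P_{ac}(H)=\Id$. The only difference is that you spell out the trace class bookkeeping (Hilbert--Schmidt factorization of $m\Delta_{\alpha}^{-1}m$ and the passage to $S-1$ and then to $H-U$) which the paper leaves implicit, and this is done correctly.
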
     
       
       Again, this can be translated into a scattering result in the original unknowns in a  weighted $L^2$ space.
              In the following we shall focus on the consequences of Lemma \ref{lemmourre} on time dependent 
       quantitative propagation estimates that are more flexible and in particular that can be 
        also performed for \eqref{NS1} for small positive $\nu$.
\bigskip

     \subsection{Main inviscid result}
     Our main result for \eqref{symm-vort} is the following:
     \begin{theo}
     \label{theomain1}
     Assume  {\bf (H1)-(H3)}.
     For every $k \in \mathbb{N}^*$ and for every compact interval $I_{0}$ in $]U_{-}, U_{+}[$, there exists a constant $C>0$ such that 
     for any initial data $\psi_{0} \in H^k$, the solution $\psi(t)$ to \eqref{symm-vort} satisfies the estimate
    $$ \| \langle A\rangle^{-k} g_{I_{0}}(H) \psi(t) \|  \leq  { C \over 1 + t^k} \| \langle A \rangle^k \psi_{0}\|,$$
uniformly in $t\ge 0$ and $\alpha \in \ZZ^*$,  where $A = i\partial_y$ and $g_{I_{0}}$ is any smooth and compactly supported function in $I_{0}$.
     \end{theo}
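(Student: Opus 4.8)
\emph{Strategy.} The engine will be the localized Mourre estimate of Lemma~\ref{lemmourre}; the whole proof is an instance of the conjugate operator method, carried out in its \emph{time-dependent} (propagation-estimate) form rather than through resolvent bounds — this is the form that will later survive the addition of viscosity in Section~\ref{proofNS}. I would begin with two reductions. First, by Lemma~\ref{lemspectrum} together with the trace-class/Kato remark recorded above, $H$ has purely absolutely continuous spectrum on $]U_-,U_+[$, so every spectral measure $J\mapsto\|\mathrm{1}_J(H)\phi\|^2$ is non-atomic; approximating the fixed compact operator $K$ of \eqref{estmourre} (which may be taken self-adjoint) by finite-rank operators then shows that for each $u_0\in I_0$ there is an interval $J\ni u_0$ so small that, for $\tilde g\in\mathcal C^\infty_c(J)$, the error term $\tilde g(H)K\tilde g(H)$ can be absorbed, i.e. the \emph{strict} Mourre estimate $\tilde g(H)\,i[H,A]\,\tilde g(H)\ge\tfrac12\theta_{I_0}\,\tilde g(H)^2$ holds. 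Covering the compact set $I_0$ by finitely many such $J$'s, using a subordinate partition of unity, and using that $g_{I_0}(H)$ maps $H^k$ boundedly to itself (because $[A,g(H)]$ and its iterates are bounded), I would reduce to proving the estimate when $g$ is supported in one such small interval where the strict Mourre estimate holds. Second, since $g(H)$ commutes with $e^{-itH}$ and $\|\langle A\rangle^k g(H)\langle A\rangle^{-k}\|<\infty$, I may assume $\psi_0\in\ran{g(H)}$ and then bound the right-hand side by $\|\langle A\rangle^k\psi_0\|$; in that case $\psi(t)=e^{-itH}\psi_0$ stays in $\ran{g(H)}$, so that $\langle\psi(t),i[H,A]\psi(t)\rangle\ge\theta\|\psi_0\|^2$ with $\theta=\tfrac12\theta_{I_0}$, and any cutoff $\tilde g(H)$ acts as the identity on $\psi(t)$.

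\emph{Core estimate (induction on $k$).} The case $k=0$ is just unitarity of $e^{-itH}$. For the inductive step I would assume the bound at order $k-1$ (for all data in $\ran{g(H)}$) and introduce a time-dependent observable, e.g. $\Phi_t:=g(H)\,\chi(A/\langle t\rangle)\,g(H)$ with $\chi$ smooth, nondecreasing, vanishing near $-\infty$ and equal to $1$ near $+\infty$ — or, equivalently, work directly with $\|\langle A\rangle^{-k}g(H)\psi(t)\|^2$. Differentiating along the flow, the time derivative splits into (i) the explicit contribution of the $\langle t\rangle$-dependence, of order $t^{-1}$ times a quantity of order $k-1$ in the weight; (ii) the commutator $i[H,\Phi_t]$, whose leading part is controlled \emph{below} by the strict Mourre estimate $i[H,A]\ge\theta$ on $\ran{g(H)}$ and produces a positive constant times an order-$(k-1)$ quantity \emph{divided by} $t$; and (iii) remainder terms built from the iterated commutators $\mathrm{ad}_A^j(H)$, $j\ge2$, which are bounded operators (as noted just after Lemma~\ref{lemmourre}) and, once dressed with the extra negative weights, are of strictly lower time-order. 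Feeding the inductive hypothesis $\|\langle A\rangle^{-(k-1)}g(H)\psi(s)\|\le C\langle s\rangle^{-(k-1)}\|\langle A\rangle^{k-1}\psi_0\|$ into the resulting differential inequality and running a Gr\"onwall/bootstrap argument would yield $\|\langle A\rangle^{-k}g(H)\psi(t)\|\le C\langle t\rangle^{-k}\|\langle A\rangle^{k}\psi_0\|$, closing the induction. An alternative, less hands-on route is to upgrade the Mourre estimate (using boundedness of $\mathrm{ad}_A^j(H)$ up to order $k+1$) to a higher-order limiting absorption principle — uniform bounds on $\langle A\rangle^{-k}\partial_\lambda^{\,k-1}(H-\lambda\mp i0)^{-1}\langle A\rangle^{-k}$ on the Mourre interval — and then integrate by parts $k$ times in $\lambda$ in the Stone-formula representation of $\langle A\rangle^{-k}g(H)e^{-itH}\langle A\rangle^{-k}$, each integration by parts gaining a factor $t^{-1}$; see \cite{ABG,HSS,GLS}.

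\emph{Conclusion.} Since $A=i\partial_y$, one has $\|\langle A\rangle^k\cdot\|=\|\cdot\|_{H^k}$, so the bound just obtained is precisely the claim for $\psi_0\in H^k$; all constants depend only on $I_0$ (through $\theta_{I_0}$, the finite cover, and bounds on $\|\mathrm{ad}_A^j(H)\|$) and are uniform in $\alpha\in\ZZ^*$ because $H$ and all its iterated commutators with $A$ are bounded uniformly in $|\alpha|\ge1$, as stressed in Section~\ref{introEuler}.

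\emph{Main obstacle.} The delicate point is the inductive step: one has to check that \emph{every} term produced by commuting $g(H)$, the weight $\langle A\rangle^{-k}$ and $e^{-itH}$ past one another is of strictly lower time-order than the coercive main term coming from Mourre, so that the induction closes with the sharp exponent $k$; this is exactly where the $C^\infty(A)$-regularity of $H$ (boundedness of all $\mathrm{ad}_A^j(H)$, itself a consequence of \textbf{(H1)}--\textbf{(H2)}) is used. By comparison, the reduction to a strict Mourre estimate — which rests on the absence of eigenvalues from Lemma~\ref{lemspectrum} — and the return to the original fluid variables are routine.
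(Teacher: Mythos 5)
Your reduction to a strict Mourre estimate on small subintervals (finite-rank approximation of the compact error $K$, continuity of the spectral measure coming from the absence of eigenvalues in Lemma~\ref{lemspectrum}, covering of $I_0$ and summation) is exactly what the paper does in Section~\ref{finEuler}, and the overall philosophy — a time-dependent propagation observable driven by the strict Mourre estimate, with Helffer--Sj\"ostrand commutator expansions to control remainders — is also the paper's. Where you diverge is in the core estimate. The paper does \emph{not} induct on $k$: following \cite{HSS}, Lemma~\ref{lemprop} differentiates $\|\chi^{1/2}(A_{t,s})g_J(H)\psi(t)\|^2$ with the \emph{translated and rescaled} observable $A_{t,s}=s^{-1}(A-a-\theta t)$, $\theta=\theta_I/4$, and then chooses $s\sim(\theta_I t)^{1/2}$, $a=-\theta_I t/8$. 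This localizes $\psi(t)$ in the region $A\gtrsim \theta_I t$ up to errors $O(s^{-p})=O(t^{-p/2})$ for every $p$, after which the $t^{-k}$ decay for \emph{all} $k$ follows simultaneously by applying the weight $\langle A\rangle^{-k}$ on that region. This is precisely the device that dissolves what you correctly identify as the main obstacle: every remainder in the commutator expansion carries a factor $s^{-j}$ and is thus of arbitrarily high time-order, so no bookkeeping against an inductive hypothesis is needed. Your induction on $k$ with $\chi(A/\langle t\rangle)$, and your alternative via a higher-order limiting absorption principle plus integration by parts in the Stone formula, are both recognized routes and should be workable; but as written the inductive step — extracting exactly one power of $t$ per step from the Mourre lower bound while keeping the remainders subordinate — is the entire difficulty and is left unverified, whereas the paper's moving, $\sqrt{t}$-wide cutoff buys the sharp exponent in one pass.
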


    The above result can be easily translated in the original velocity  coordinates. Indeed, for $\alpha \in \mathbb{Z}^*$, we have
    $$\| v^1_{\alpha}(t)\| \leq \|\langle A \rangle^{-1} \omega_{\alpha} (\alpha t)\|
    =  \|\langle A \rangle^{-1} m S^{-1} \psi  (\alpha t)\| \lesssim  \| \langle A \rangle^{-1} 
      \psi(\alpha t)\|$$
      since $\langle A \rangle^{-1} m S^{-1}\langle A\rangle$ is a bounded operator.
      In a similar way, we have
      $$  \| v^2_{\alpha}(t)\| \lesssim    |\alpha | \| \langle A \rangle^{-2} 
      \psi(\alpha t)\|.$$
      Therefore, we obtain the following
    \begin{cor}\label{cor-inviscid}
     Assume that the initial vorticity is of the form   $\omega_{0}= mS^{-1} g_{I_{0}}(H) \psi_{0}$, for any $\psi_0 \in H^2$ and
      for any compact interval $I_0$ in $]U_{-}, U_{+}[$. Then, the solution $v$ to \eqref{Euler1} satisfies the following estimates
    $$  \| v^1_{\alpha}(t)\| \leq {C \over 1 + (|\alpha| t)} \|\psi_{0}\|_{H^1}, \quad
      \| v^2_{\alpha}(t)\| \leq {C |\alpha|  \over 1 + (|\alpha| t)^2} \|\psi_{0}\|_{H^2},$$
uniformly in $t\ge 0$ and $\alpha \in \ZZ^*$, with $v_\alpha$ being the Fourier transform of $v$ with respect to variable $x$.     \end{cor}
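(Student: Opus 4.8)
The plan is to read off the corollary from Theorem~\ref{theomain1}; the translation from $\psi$ to the velocity has essentially been carried out in the discussion preceding the statement, so the argument is short and there is no genuine obstacle, only a little bookkeeping.

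First I would fix the initial data. Since $\omega = m S^{-1}\psi$, the hypothesis $\omega_0 = m S^{-1} g_{I_0}(H)\psi_0$ is equivalent to $\psi(0) = S m^{-1}\omega_0 = g_{I_0}(H)\psi_0$, and in particular $m^{-1}\omega_0 = S^{-1}g_{I_0}(H)\psi_0 \in L^2$, so the change of unknowns is legitimate. Because functional calculus in $H$ commutes with the unitary group $e^{-itH}$ it generates, the solution of \eqref{symm-vort} is then $\psi(t) = e^{-itH}g_{I_0}(H)\psi_0 = g_{I_0}(H)\,\widetilde\psi(t)$, where $\widetilde\psi(t) := e^{-itH}\psi_0$ is the solution of \eqref{symm-vort} with the datum $\psi_0 \in H^2$. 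Applying Theorem~\ref{theomain1} to $\widetilde\psi$ with $k=1$ and $k=2$, and using that $g_{I_0}(H)\widetilde\psi(t) = \psi(t)$ together with $\|\langle A\rangle^k\psi_0\| = \|\psi_0\|_{H^k}$, I obtain
$$\|\langle A\rangle^{-1}\psi(t)\| \le \frac{C}{1+t}\,\|\psi_0\|_{H^1},\qquad \|\langle A\rangle^{-2}\psi(t)\| \le \frac{C}{1+t^2}\,\|\psi_0\|_{H^2},$$
uniformly in $t\ge 0$ and $\alpha\in\ZZ^*$.

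Next I would recall the velocity recovery. Taking the Fourier transform of \eqref{Euler1} in $x$ and using $\Delta_\alpha v^2_\alpha = i\alpha\,\omega_\alpha$ with $i\alpha v^1_\alpha + \partial_y v^2_\alpha = 0$ gives $v^1_\alpha = -\partial_y\Delta_\alpha^{-1}\omega_\alpha$ and $v^2_\alpha = i\alpha\,\Delta_\alpha^{-1}\omega_\alpha$. The multipliers $\partial_y\Delta_\alpha^{-1}\langle A\rangle$ and $\Delta_\alpha^{-1}\langle A\rangle^2$ are bounded uniformly for $|\alpha|\ge 1$, since $|\xi|(1+\xi^2)^{1/2}\le \xi^2+\alpha^2$ and $1+\xi^2\le \xi^2+\alpha^2$ whenever $\alpha^2\ge 1$, and $\langle A\rangle^{-1} m S^{-1}\langle A\rangle$, $\langle A\rangle^{-2} m S^{-1}\langle A\rangle^2$ are bounded on $L^2$ uniformly in $\alpha$. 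Recalling from \eqref{change time} that the (rescaled) vorticity is $m S^{-1}\psi$ evaluated at the rescaled time, so that $\|v^1_\alpha(t)\| \le \|\langle A\rangle^{-1}\omega_\alpha(\alpha t)\| \lesssim \|\langle A\rangle^{-1}\psi(\alpha t)\|$ and $\|v^2_\alpha(t)\| \le |\alpha|\,\|\langle A\rangle^{-2}\omega_\alpha(\alpha t)\| \lesssim |\alpha|\,\|\langle A\rangle^{-2}\psi(\alpha t)\|$, and substituting the two bounds of the previous paragraph at time $\alpha t$, I conclude
$$\|v^1_\alpha(t)\| \le \frac{C}{1+|\alpha| t}\,\|\psi_0\|_{H^1},\qquad \|v^2_\alpha(t)\| \le \frac{C|\alpha|}{1+(|\alpha| t)^2}\,\|\psi_0\|_{H^2},$$
which is the claim. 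The only points that require a line of justification, rather than a true obstacle, are the uniform-in-$\alpha$ boundedness of the composite operators $\langle A\rangle^{-k} m S^{-1}\langle A\rangle^{k}$ (resting on $m$ being bounded with $|m^{(j)}|\lesssim m$ from \textbf{(H2)}, the coercivity of $\Sigma$ from Lemma~\ref{lemSigma}, and the compactness of $S-1$), and the commutation of $g_{I_0}(H)$ with the flow, both of which are routine.
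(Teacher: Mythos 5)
Your proposal is correct and follows essentially the same route as the paper: the paper likewise recovers $v^1_\alpha=-\partial_y\Delta_\alpha^{-1}\omega_\alpha$ and $v^2_\alpha=i\alpha\Delta_\alpha^{-1}\omega_\alpha$, bounds $\|v^1_\alpha(t)\|\le\|\langle A\rangle^{-1}\omega_\alpha(\alpha t)\|$ and $\|v^2_\alpha(t)\|\lesssim|\alpha|\,\|\langle A\rangle^{-2}\omega_\alpha(\alpha t)\|$, invokes the uniform boundedness of $\langle A\rangle^{-k}mS^{-1}\langle A\rangle^{k}$, and applies Theorem~\ref{theomain1} with $k=1,2$ after the time rescaling \eqref{change time}. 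Your write-up merely makes explicit the Fourier-multiplier inequalities and the commutation of $g_{I_0}(H)$ with the flow, which the paper leaves implicit.
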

 
The fact that the finite edges of the spectrum of $H$ at $U_{\pm}$ are not covered is a well-known
 limitation of the Mourre's theory \cite{Mourre}. In our case, this is a real difficulty that  comes from the fact that $U'(y)$ tends to zero at infinity and hence there is no positive lower bound in the Mourre's estimate \eqref{estmourre}.

 \section{Uniform mixing  and enhanced dissipation}
  We shall now describe our results for the viscous equations \eqref{NS1}. Again we write the equation in the vorticity
   form  and take the Fourier transform in $x$, leading to 
   \beq
\label{NS1-a}
 \partial_{t} \omega  + i \alpha L_{0}(y) \omega - \nu \Delta_{\alpha} \omega = 0
 \eeq
 where $L_{0}$ is defined in \eqref{Euler-a}. As in the inviscid case, we shall focus on the case $\alpha \neq 0$. Let us again set $\omega = mS^{-1} \psi$ to obtain  for $\psi$
$$
\partial_{t} \psi+ i \alpha H\psi- \nu S { 1 \over m} \Delta_{\alpha} m S^{-1} \psi= 0.
$$
Note that we shall not perform  the time scaling \eqref{change time}, as it is not well adapted to the viscous term. In addition, the equation is no longer symmetric. 
Nevertheless, it is symmetric up to a very small  error. Precisely, we can write the above equation under the form
\beq
\label{NSpsi}
\partial_{t} \psi+ i \alpha H\psi- \nu  \Delta_{\alpha }\psi=  \nu R\psi
\eeq
in which 
\beq
\label{Rpsidef} 
\begin{aligned}
R & = S { 1 \over m} \Delta_{\alpha} m S^{-1} - \Delta_\alpha  = S {1\over m} \partial_y^2 m S^{-1} - \partial_y^2 
\\&=  S {m''\over m} S^{-1}  + 2 S {m' \over m} \partial_{y} S^{-1} + 
 (S- 1) \partial_{y}^2 S^{-1}  + \partial_{y}^2  S^{-1} (1 - S).
\end{aligned} \eeq
 As we will see, the right hand-side does not have much influence on the dynamics for times
  $\nu t \ll 1$.

We shall  use the form \eqref{NSpsi} to state our main results.
 At first we shall establish that the estimates of Theorem \ref{theomain1} can be generalized to 
 \eqref{NSpsi} up to the viscous dissipation time scale $\nu^{-1}$. Precisely, we have

\begin{theo}
\label{theomain2}
 Assume  {\bf (H1)-(H3)}.
     For every $k \in \mathbb{N}^*$ and for  every compact interval $I_{0}\subset]U_{-}, U_{+}[$, there exist positive constants $C, M_0$ such that 
     for every initial data $\psi_{0} \in H^k$ and every $\nu \in (0, 1]$, the solution $\psi(t)$ to \eqref{NSpsi} satisfies the estimate
  $$   \| \langle A\rangle^{-k} g_{I_{0}}(H) \psi(t) \| \leq C\left( {1 \over 1+ (|\alpha| t)^k} \| \langle A \rangle^k
      \psi_{0}\| +   (\nu t)^{1 \over 2} e^{ M_{0} \nu t} \| \psi_{0}\|\right), $$
 uniformly in $t\ge 0$ and $\alpha \in \ZZ^*$,  where $A = i\partial_y$ and $g_{I_{0}}$ is any smooth and compactly supported function in $I_{0}$.
\end{theo}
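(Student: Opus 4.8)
\textbf{Proof strategy for Theorem \ref{theomain2}.} The plan is to treat \eqref{NSpsi} as a perturbation of the inviscid equation \eqref{symm-vort} combined with the viscous heat semigroup, and to run a Duhamel/energy argument that isolates the genuinely dissipative part $\nu\Delta_\alpha$ from the small symmetry-breaking remainder $\nu R$. First I would record the elementary facts that $R$ is a bounded operator on $L^2$ (this is immediate from \eqref{Rpsidef} using {\bf(H2)}: the factors $m''/m$, $m'/m$ are bounded, and $S-1$, $\partial_y^2 S^{-1}(1-S)$ are bounded since $S-1 = m\Delta_\alpha^{-1}m(1+S)^{-1}$ maps $L^2$ to $H^2$), and that $\langle A\rangle^k R \langle A\rangle^{-k}$ is bounded for each $k$, with norms uniform in $\alpha\ge 1$. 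I would also note that $\nu\Delta_\alpha$ commutes with $\langle A\rangle$ and generates a contraction semigroup that only helps. The key decomposition is $e^{-tH_\nu}$ where $H_\nu\psi = i\alpha H\psi - \nu\Delta_\alpha\psi - \nu R\psi$; writing $\psi(t) = e^{-t(i\alpha H - \nu\Delta_\alpha)}\psi_0 + \nu\int_0^t e^{-(t-s)(i\alpha H - \nu\Delta_\alpha)} R\psi(s)\,ds$, the second term is controlled by $\nu t \sup_{[0,t]}\|\psi(s)\|e^{M_0\nu s}$ via a Gr\"onwall estimate on the plain $L^2$ norm (here one uses that $\|e^{-tH_\nu}\|_{L^2\to L^2}\le e^{M_0\nu t}$, which follows by differentiating $\|\psi\|^2$ and using $\|R\|_{L^2\to L^2}\le M_0$).

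The heart of the matter is then the first term, $\| \langle A\rangle^{-k} g_{I_0}(H) e^{-t(i\alpha H - \nu\Delta_\alpha)}\psi_0\|$, which must be shown to decay like $(1+|\alpha|t)^{-k}$ uniformly in $\nu\in(0,1]$. The strategy here is exactly the one used for Theorem \ref{theomain1}: iterate the Mourre estimate of Lemma \ref{lemmourre}. Concretely, following the conjugate operator method, one controls the weighted norm via the observable $\langle A\rangle^{-k}$ by differentiating in time and commuting through $H$; the commutator $i[H,A]$ is bounded below by $\theta_I$ modulo the compact $K$ on the spectral subspace $g_{I_0}(H)L^2$. The new feature relative to Theorem \ref{theomain1} is the extra term $-\nu\Delta_\alpha = -\nu\partial_y^2 + \nu\alpha^2$; I would check that $[\langle A\rangle^{-k}, \nu\Delta_\alpha] = 0$ (since $A = i\partial_y$ commutes with $\partial_y^2$) and that, when one tracks the evolution of the relevant quadratic form, the dissipative term contributes a sign-favorable quantity $\nu\|\nabla_\alpha(\cdots)\|^2$ that can only be dropped or used. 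Thus the Mourre iteration that produced the $t^{-k}$ decay in the inviscid case goes through verbatim, with the heat term giving nothing worse than the identity. The compact error $K$ is absorbed, as in the inviscid case, by an inductive/bootstrap argument: at each level the $K$-term is handled using the decay already established at the previous level together with compactness (this is where one genuinely needs $g_{I_0}$ supported strictly inside $]U_-, U_+[$).

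I would present the argument by induction on $k$. For $k=1$: set $\Phi(t) = \langle\psi(t), \langle A\rangle^{-1} g(H)^2 \langle A\rangle^{-1}\psi(t)\rangle$ with $g = g_{I_0}$, differentiate, and use \eqref{NSpsi} to get $\dot\Phi = -\alpha\langle \psi, \langle A\rangle^{-1}g(H)^2 i[H, \langle A\rangle^{-1}]\psi\rangle - 2\nu\langle \cdots \rangle_{\text{diss}} + \nu(\text{R-terms})$. The dissipative term has the good sign; the $R$-terms are $O(\nu)$ in the unweighted norm and feed into the Gr\"onwall bound; and the commutator term, after relating $i[H,\langle A\rangle^{-1}]$ to $\langle A\rangle^{-1}i[H,A]\langle A\rangle^{-1}$ plus lower order, is bounded below using Lemma \ref{lemmourre}, producing a differential inequality of the form $\dot\Phi \le -\frac{c|\alpha|}{t}\Phi + (\text{compact correction}) + C\nu(\cdots)$ that integrates to the claimed $(1+|\alpha|t)^{-1}$ rate plus the $(\nu t)^{1/2}e^{M_0\nu t}$ correction. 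For the inductive step one uses the same scheme with $\langle A\rangle^{-k}$, the compact remainder being controlled by the level-$(k-1)$ estimate.

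\textbf{Main obstacle.} The delicate point is making the Mourre iteration quantitative and uniform in $\nu$ while simultaneously carrying the non-self-adjoint perturbation $\nu R$: one must be sure that the $R$-contributions, which are \emph{not} localized by $g_{I_0}(H)$ and need not respect the weighted spaces in a decay-friendly way, genuinely only cost a factor growing like $\nu t$ (hence harmless for $\nu t\lesssim 1$) and do not interfere with the commutator positivity. Equivalently, the challenge is to decouple cleanly the ``mixing'' mechanism (Mourre positivity, giving algebraic decay at rate $|\alpha|t$) from the ``dissipative plus error'' mechanism (giving the $e^{M_0\nu t}$ growth and $(\nu t)^{1/2}$ loss), and to verify that the heat term $\nu\Delta_\alpha$ never degrades the commutator estimate — which ultimately works because $A = i\partial_y$ and $\partial_y^2$ commute, so the viscous term is invisible to the conjugate operator.
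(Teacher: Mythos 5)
Your overall philosophy --- reuse the conjugate-operator machinery from the inviscid case and treat the viscous terms perturbatively --- is indeed the paper's, but two of your key steps fail as stated. First, $R$ is \emph{not} a bounded operator on $L^2$: the term $2S\frac{m'}{m}\partial_y S^{-1}$ in \eqref{Rpsidef} is genuinely first order, and the paper only obtains $R=R^0+\partial_y R^1$ with $R^0,R^1$ bounded (see \eqref{Rnu1}). This undermines both your Gr\"onwall bound (``$\|R\|_{L^2\to L^2}\le M_0$'' is false; the correct route, as in Proposition \ref{propNS1}, is to integrate by parts the $\partial_y R^1$ part and absorb it into $\nu\|\nabla_\alpha\psi\|^2$ by Young) and your Duhamel estimate of the second term by $\nu t\sup_s\|\psi(s)\|$: since $R\psi(s)$ lies only in $H^{-1}$, you would need a parabolic smoothing bound of the type $\|e^{-\tau(i\alpha H-\nu\Delta_\alpha)}\partial_y\|_{L^2\to L^2}\lesssim(\nu\tau)^{-1/2}$ for the full non-normal propagator, which you neither state nor prove. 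The paper avoids Duhamel entirely and keeps $\nu R\psi$ inside the localized energy identity. Relatedly, the heat term is not ``invisible'': while $[\Delta_\alpha,A]=0$, the commutator $[g_{I_0}(H),\Delta_\alpha]$ does \emph{not} vanish, and these $O(\nu)$ commutator errors --- together with the fact that $\|g_J(H)\psi(t)\|$ is no longer conserved --- are precisely part of where the $(\nu t)^{1/2}e^{M_0\nu t}\|\psi_0\|$ correction comes from, not only the $R$-term.

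Second, your proposed decay mechanism --- a differential inequality $\dot\Phi\le -\frac{c|\alpha|}{t}\Phi+\cdots$ for $\Phi=\|\langle A\rangle^{-1}g(H)\psi\|^2$, with the compact remainder $K$ absorbed by induction on $k$ --- is not justified and is not how Theorem \ref{theomain1} is proved. The Mourre estimate bounds $g(H)\,i[H,A]\,g(H)$ from below; this does not convert into a lower bound for the commutator tested against the weight $\langle A\rangle^{-2k}g(H)^2$ with the $t^{-1}$ prefactor you would need. The actual mechanism (Lemma \ref{lemprop}, following Hunziker--Sigal--Soffer) is a propagation observable $\chi\bigl((A-a-\theta\alpha t)/(\alpha s)\bigr)$: one first shows the solution escapes to the region $A\gtrsim\theta\alpha t$ and only afterwards converts this into $\langle A\rangle^{-k}$ decay; the viscous proof repeats that computation with $t,s$ replaced by $\alpha t,\alpha s$ and tracks the viscous and $R$ errors through it. Likewise, $K$ is eliminated once and for all by shrinking the spectral intervals, using the absence of embedded eigenvalues (Lemma \ref{lemspectrum}) so that the measures $\langle dE_\lambda c,c\rangle$ are continuous and the finite-rank pieces of $K$ become negligible, yielding the strict estimate \eqref{hypprop}; no induction on $k$ is involved. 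As it stands, the core quantitative step of your argument is missing.
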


Note that the above result shows that the estimates of Theorem \ref{theomain1}
 remain valid up to a correction term that is very small as long as $\nu t \ll 1$.
 One can think that the study of the stability of stationary shear flows $U_s = [U(y),0]$ in the Navier-Stokes equation
      is not really pertinent for times larger than $\nu^{-1}$. Indeed, 
       $U_s(y)$ is not an exact stationary solution of the nonlinear Navier-Stokes equation, though it  is classical
       in fluid mechanics to add a small stationary  forcing term in the equation so that $U_{s}(y)$ becomes
        an exact solution (see \cite{Reid} for example). The exact shear solution of Navier-Stokes equations (without a forcing) is time dependent, $U_s = [U(t,y),0]$, with $U(t,y)$ solving the heat equation
        $$ \partial_{t} U - \nu \partial_{y}^2 U=0, \quad U_{\vert_{t=0}}= U(y).$$
         As long as $\nu t\ll1$, it does not make much a difference to replace $U(t,y)$ by $U(y)$.
          Nevertheless, for $\nu t\gtrsim1$, the stationary profile $U(y)$ is no longer
          a good approximation, and in particular the derivatives $\partial_{y}^lU(t,y)$
           are damped by the diffusion.  This was taken into account for example in the papers
           \cite{Zhang17c, MaeMas, LinXu}.
       Let us also point out that our assumptions
  {\bf (H1)-(H3)} ensure the spectral stability of the shear flows to the Euler equations, but no assumptions were made to ensure
   the stability to the Navier-Stokes equations for all times (noting that since the channel $\TT \times \RR$ has no boundary, 
    the result of \cite{Grenier-Guo-Nguyen,Grenier-Guo-Nguyen1} does not apply).
    
  Our last main result is the following local enhanced dissipation for \eqref{NSpsi}.
  
 \begin{theo}
\label{theomain3}
Assume  {\bf (H1)-(H3)}.
     For every compact interval $I_{0}\subset]U_{-}, U_{+}[$, there are positive constants $C_0, M_0, c_0$ such that 
     for every initial data $\psi_{0} \in H^1$ and every $\nu \in (0, 1]$, the solution $\psi(t)$ to \eqref{NSpsi} satisfies the estimate
   $$ N(t) \leq C_0 \Big( e^{-c_{0} \nu^{1 \over3 } t} N(0) 
    +( \nu^{1 \over 3}+ (\nu t)^{1 \over 2} e^{M_{0} \nu t}) ( \|\psi_{0}\| + \|\alpha \psi_0\|) \Big)$$
 uniformly in $t\ge 0$ and $\alpha \in \ZZ^*$,  where 
  $$N(t)= \| g_{I_{0}}(H) \psi(t) \| + \| \alpha g_{I_{0}}(H) \psi(t)\| + \nu^{1 \over 3} \| \partial_{y}  g_{I_{0}}(H) \psi(t)\| $$
   and $g_{I_{0}}$ is any smooth and compactly supported function in $I_{0}$.
   
\end{theo}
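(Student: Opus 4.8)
The plan is to run the conjugate-operator (Mourre) estimate of Lemma~\ref{lemmourre}, with conjugate operator $A=i\partial_y$, through a hypocoercivity/augmented-energy argument, treating the antisymmetric part $\nu R$ of \eqref{NSpsi} and the localization commutators as controlled perturbations. Recall we may assume $\alpha>0$. First I would set up the localized equation and the cheap bounds: put $\phi:=g_{I_0}(H)\psi$, so that from \eqref{NSpsi}
$$\partial_t\phi+i\alpha H\phi-\nu\Delta_\alpha\phi=\mathcal E,\qquad \mathcal E:=\nu[g_{I_0}(H),\Delta_\alpha]\psi+\nu g_{I_0}(H)R\psi .$$
Since $[H,\partial_y]$ — hence $[g_{I_0}(H),\partial_y]$ and $[g_{I_0}(H),\Delta_\alpha]$, by the Helffer--Sj\"ostrand calculus and the structure $H=SU(y)S$ — is bounded, and since $R$ is, by \eqref{Rpsidef} and {\bf(H2)}, a first-order operator with bounded coefficients, one has $\|\mathcal E(t)\|\lesssim\nu(\|\partial_y\psi(t)\|+\|\psi(t)\|)$. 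A plain energy estimate on \eqref{NSpsi}, using the technical bound $\langle R\chi,\chi\rangle\le M_0\|\chi\|^2+\tfrac12\|\partial_y\chi\|^2$ (Section~\ref{sectiontech}), gives for any solution $\chi$ of \eqref{NSpsi} that $\|\chi(t)\|\le e^{M_0\nu t}\|\chi_0\|$ and $\nu\int_0^t\|\partial_y\chi\|^2\le e^{2M_0\nu t}\|\chi_0\|^2$; applied to $\chi=\psi$ and to $\chi=\alpha\psi$ (which solves the same equation), this controls $\nu\int_0^t(\|\partial_y\psi\|^2+\|\psi\|^2+\|\alpha\partial_y\psi\|^2+\|\alpha\psi\|^2)$ by $e^{2M_0\nu t}(\|\psi_0\|+\|\alpha\psi_0\|)^2$ — which is exactly what will generate the $(\nu^{1/3}+(\nu t)^{1/2}e^{M_0\nu t})(\|\psi_0\|+\|\alpha\psi_0\|)$ part of the claimed bound. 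Finally, since $H$ has no eigenvalue (Lemma~\ref{lemspectrum}), a partition-of-unity argument subordinate to $I_0$ lets one shrink the support of the cutoff so that the compact remainder $K$ in Lemma~\ref{lemmourre} has operator norm $<\tfrac12\theta_{I_0}$ on it; hence I may assume the \emph{strict} Mourre estimate $g_{I_0}(H)\,i[H,A]\,g_{I_0}(H)\ge\tfrac12\theta_{I_0}\,g_{I_0}(H)^2$.

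The core is the augmented functional. For small constants $0<\beta^2<\delta\ll1$, chosen in terms of $\theta_{I_0}$ and $\|[H,\partial_y]\|$, set
$$\mathcal G(t):=\|\phi\|^2+\|\alpha\phi\|^2+\delta\,\nu^{2/3}\|\partial_y\phi\|^2-\beta\,\nu^{1/3}\langle\alpha\rangle\,\langle i\partial_y\phi,\phi\rangle,\qquad \langle\alpha\rangle:=(1+\alpha^2)^{1/2},$$
the last being the Mourre cross term (sign chosen so Lemma~\ref{lemmourre} enters favorably). For $\beta$ small, $\mathcal G(t)\simeq\|\phi\|^2+\|\alpha\phi\|^2+\nu^{2/3}\|\partial_y\phi\|^2\simeq N(t)^2$, and since $\langle\alpha\rangle\ge1$ one has $\mathcal G(0)\lesssim N(0)^2$ with $\alpha$-independent constants. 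Differentiating: the diffusion yields $-2\nu\|\partial_y\phi\|^2-2\nu\alpha^2\|\phi\|^2$, its $\alpha$-weighted analogue, and $-2\delta\nu^{5/3}\|\partial_y^2\phi\|^2$; the transport term $-i\alpha H\phi$ inside the cross term produces, via the strict Mourre estimate, $-\tfrac12\beta\nu^{1/3}\langle\alpha\rangle\,\alpha\,\theta_{I_0}\|\phi\|^2$ plus viscous corrections $O(\beta\nu^{1/3}\langle\alpha\rangle\,\nu\|\partial_y\phi\|\|\partial_y^2\phi\|)$; and the transport term inside $\|\partial_y\phi\|^2$ produces the only genuinely dangerous term, bounded by $\|[H,\partial_y]\|\,|\alpha|\,\|\phi\|\,\|\partial_y\phi\|=\|[H,\partial_y]\|\,\|\alpha\phi\|\,\|\partial_y\phi\|$. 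Because $\langle\alpha\rangle\alpha\ge\max(1,\alpha^2)$, the Mourre gain dominates $c\,\nu^{1/3}(\|\phi\|^2+\|\alpha\phi\|^2)$; Young's inequality splits the dangerous term, its $\|\partial_y\phi\|^2$-share going into $\nu(1+\alpha^2)\|\partial_y\phi\|^2$ and its $\|\alpha\phi\|^2$-share into the Mourre gain — this is precisely the constraint forcing $\delta\ll(\theta_{I_0}/\|[H,\partial_y]\|^2)^{2/3}$; the viscous corrections go into $\nu\|\partial_y\phi\|^2$ and $\delta\nu^{5/3}\|\partial_y^2\phi\|^2$; and the $\mathcal E$-terms are handled by $2|\langle\mathcal E,(1+\alpha^2)\phi\rangle|+2\delta\nu^{2/3}|\langle\partial_y\mathcal E,\partial_y\phi\rangle|\le\tfrac12 c\,\nu^{1/3}\mathcal G+C\nu^{5/3}(\|\partial_y\psi\|^2+\|\psi\|^2+\|\alpha\partial_y\psi\|^2+\|\alpha\psi\|^2)$, where for the $\|\alpha\phi\|^2$-slot one pairs $\alpha\mathcal E$ with $\alpha\phi$ and uses $\|\alpha\mathcal E\|\lesssim\nu(\|\alpha\partial_y\psi\|+\|\alpha\psi\|)$. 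The upshot is
$$\frac{d}{dt}\mathcal G\le -c\,\nu^{1/3}\mathcal G+C\nu^{5/3}\big(\|\partial_y\psi\|^2+\|\psi\|^2+\|\alpha\partial_y\psi\|^2+\|\alpha\psi\|^2\big).$$

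Finally, Gr\"onwall gives $\mathcal G(t)\le e^{-c\nu^{1/3}t}\mathcal G(0)+C\int_0^t e^{-c\nu^{1/3}(t-s)}\nu^{5/3}(\cdots)(s)\,ds$, and inserting the a priori bounds from the first step makes the integral $\lesssim\nu^{2/3}e^{2M_0\nu t}(\|\psi_0\|+\|\alpha\psi_0\|)^2+\nu^{5/3}t\,e^{2M_0\nu t}(\|\psi_0\|+\|\alpha\psi_0\|)^2$. Taking square roots and using $N(t)\lesssim\mathcal G(t)^{1/2}$, $\mathcal G(0)^{1/2}\lesssim N(0)$, and $\nu^{1/3}e^{M_0\nu t}\lesssim\nu^{1/3}+(\nu t)^{1/2}e^{M_0\nu t}$ (valid since $\nu\le1$) yields the stated estimate.

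I expect the delicate points to be the following. First, the design of $\mathcal G$: a \emph{single} cross term, weighted by $\nu^{1/3}\langle\alpha\rangle$, must simultaneously damp $\|g_{I_0}(H)\psi\|$ and $\|\alpha g_{I_0}(H)\psi\|$ at the \emph{uniform} rate $\nu^{1/3}$ while the weight $\delta\nu^{2/3}$ on $\|\partial_y g_{I_0}(H)\psi\|^2$ must still absorb the $O(|\alpha|)$ transport error in $\frac{d}{dt}\|\partial_y\phi\|^2$ — the balance of these two requirements pins down all the exponents, and getting $\alpha$-uniform constants out of it is the crux of the bookkeeping. Second, the reduction to a strict Mourre estimate, i.e.\ removing the compact remainder $K$: this relies on the absence of eigenvalues together with a uniform-shrinking argument for the localization interval; without it, the classical absorption of $K$ would feed back norms of $\psi_0$ heavier than those allowed in the statement, so this step — rather than the (otherwise routine) hypocoercive algebra — is where the real care is needed.
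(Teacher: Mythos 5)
Your proposal is correct and follows essentially the same route as the paper: reduction to a strict Mourre estimate on small spectral subintervals covering $I_0$ (using the absence of eigenvalues), a hypocoercive functional combining $\|\phi\|^2$, $\|\alpha\phi\|^2$, $\nu^{2/3}\|\partial_y\phi\|^2$ with the cross term $\nu^{1/3}\langle A\phi,\alpha\phi\rangle$, absorption of the viscous and commutator errors by Young's inequality, and Gr\"onwall combined with the basic energy estimate of Proposition \ref{propNS1}. The paper's functional $Q$ uses large weights $\Gamma^4,\Gamma,1$ where you use small ones $1,\beta,\delta$, which is only a rescaling.
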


   From the above estimate we see that after localization 
    in a strict spectral subspace of $H$  the solution of \eqref{NSpsi} is damped 
    at the time scale $\nu^{-{1 \over 3}}$ which is much smaller than the usual viscous
    dissipation scale $\nu^{-1}$.

\section{Proof of the inviscid results}
\label{proofEuler} 
In this section, we shall prove the results stated in Section \ref{introEuler}.
\subsection{Proof of Lemma \ref{lemSigma}}
 First, we observe that the essential spectrum of $\Sigma=1 + m \Delta_{\alpha}^{-1} m$ on $L^2$ is reduced to $1$ because of the decay assumptions on $m$ 
  in   {\bf (H2)}. Thus, it suffices to show that $\Sigma$ has only positive eigenvalues. Let us assume by contradiction that  $\lambda \leq 0$ is an eigenvalue of $\Sigma$. That is, there exists a nonzero $\psi \in L^2$ such that
  $$ \Sigma \psi = \lambda \psi.$$
   Set  $u= \Delta_{\alpha}^{-1} m \psi$. Then, $u \in H^2$ and
   $$ -\lambda  (-\Delta_{\alpha} u)  + (\mathcal{L} + \alpha^2 )u= 0$$
   where $\mathcal{L}  = -\partial_y^2 - m^2$ as defined in \eqref{Lcurldef}.
    Taking the scalar product with $u$ and integrating by parts, we get from {\bf(H3)} that
    $$ -\lambda \|\nabla_\alpha u\|^2  + (\lambda_{0}+ \alpha^2) \|u\|^2 \leq 0$$
with $\nabla_\alpha = (\partial_y,i\alpha)^T$.  Since $-\lambda \geq 0$, $\alpha \in \ZZ^*$, and  $\lambda_{0}+ \alpha^2>0$, we get that  $u=0$, which 
    is a contradiction. Lemma \ref{lemSigma} follows. 

 \subsection{Proof of Lemma \ref{lemspectrum}}
 Since $H$  is a compact perturbation of the multiplication operator by $U(y)$, we first get that $\sigma_{ess}(H)= [U_{-}, U_{+}]$.
 To exclude eigenvalues and embedded eigenvalues we will  adapt   the arguments of \cite{LinXu, LinSIAM, Sattinger} for  the Rayleigh equation in bounded domains. To proceed, let $c\in \RR$ be an eigenvalue of $H$. That is, there exists a nonzero $\psi \in L^2(\mathbb{R}) $
  such that  $$H \psi = c \psi.$$ 
 
 \subsubsection*{Case 1: $c \in \mathbb{R}\backslash [U_{-}, U_{+}]$.} In view of \eqref{symm-vort}, we get that the vorticity $ \omega =  m S^{-1} \psi \in L^2$ and solves
  $$ (U-c) \omega=  U'' \Delta_{\alpha}^{-1} \omega.$$
Setting $\phi =  \Delta_{\alpha}^{-1} \omega$, we note that $\phi \in H^2$ and solves the Rayleigh equation 
   \beq
   \label{vp1}
    - \partial_{y}^2 \phi  + {U'' \over U-c} \phi=  - \alpha^2 \phi.
    \eeq
    Note that since $c \not \in [U_{-}, U_{+}]$,   $U''/( U-c)$ is not singular. This means that $-\alpha^2 <0$ is an eigenvalue of the one-dimensional  Schr\"odinger operator $- \partial_y^2  + {U'' \over U-c}$.
   Since the essential spectrum of this operator is $[0,  + \infty[$, we obtain that the bottom of the spectrum is an eigenvalue $\lambda \le -\alpha^2 <0$  
    and that the corresponding eigenvector can be taken positive. Therefore, there exists $v \in H^2$, $v>0$ such that
    \beq
    \label{vp2} - \partial_{y}^2 v  - \lambda v  = - {U'' \over U-c} v.
    \eeq
    Observe that $U''/( U-c)$ belongs to $L^1$, since $1/(U-c)$ is bounded and
       $U''= - Um^2 \in L^1$, upon recalling from Assumption {\bf(H2)} that $U$ is bounded and $m\in L^2.$
    By using the Green's function of  $ - \partial_{y}^2 - \lambda$, we get from \eqref{vp2} that
     $$ v =  - G_{\sqrt{- \lambda}} * { {U'' \over U-c} v }, \qquad   G_{\sqrt{- \lambda}} (y)= - { 1 \over  \sqrt{- \lambda}} e^{- 
      \sqrt{- \lambda} \, |y|}.$$
     In particular, $v\in L^1$, since  
     $$ \|v \|_{L^1} \lesssim  \left\|  { {U'' \over U-c} v }\right \|_{L^1} \lesssim \|v\|_{L^\infty} \lesssim \|v\|_{H^1}<+\infty.$$
   Finally, we rewrite \eqref{vp2} as 
    $$ -   \partial_{y}\left( (U-c) \partial_{y} v \right)  +\partial_{y}( U' v)  = \lambda (U-c) v.
   $$
     Therefore, we obtain after integration  that $\lambda \int_{\mathbb{R}} (U-c) v = 0$, which is a contradiction since $v>0$ and $U-c$ has a constant sign.
     
     \subsubsection*{Case 2: $c\in \{U_{-}, \, U_{+}\}$.} In this case, we have $U''/(U-c) \in L^\infty \cap L^2$ from Assumption {\bf(H2)}.  Hence, again we have $v\in L^1$, 
     since  $$ \|v \|_{L^1} \lesssim  \left\|  { {U'' \over U-c} v }\right \|_{L^1} \lesssim \|v\|_{L^2} <+\infty.$$
     We thus arrive at the same contradiction as in the previous case.

     \subsubsection*{Case 3: $c \in ]U_{-}, U_{+}[$.} Let $y_c$ be the point (which is unique since $U'>0$)
      such that $U(y_{c})= c$ and set  $ I_{-}= ]-\infty, y_{0}[$ and $ I_{+}= ]y_{0}, + \infty[$. As in the previous cases, we get that there
      exists  a nontrivial $\phi \in H^2(\mathbb{R})$  that solves the Rayleigh equation \eqref{vp1} on $I_{\pm}$.
      
       We first prove that we must have $\phi(y_{c})\neq 0.$ Indeed, assuming otherwise that $\phi(y_{c})=0$ and proceeding as above,  we get that at least one of the  self-adjoint operators
        $L_{\pm} = -\partial_y^2 + {U'' \over U-c}$  with domain $H^2(I_{\pm}) \cap H^1_{0}(I_{\pm})$ (which are well defined thanks to the Hardy inequality and the fact that $U'>0$)
         has a negative eigenvalue $- \alpha ^2$. Therefore, we again find that  for  one of the intervals  $I_{\pm}$, there exist a negative eigenvalue $\lambda_{\pm}$
          and a positive eigenfunction $v_{\pm}$ such that
        $$ -   \partial_{y}\left( (U-c) \partial_{y} v_{\pm} \right)  +\partial_{y}( U' v_{\pm})  = \lambda_{\pm} (U-c) v_{\pm}, \quad y \in I_{\pm}.$$
        We can then also  integrate on $I_\pm$ to obtain  
        $$  \lambda_{\pm} \int_{I_{\pm}} (U-c) v_{\pm} =0$$
        upon recalling that $U(y_{c})= c$ and $v_{\pm}(y_{c})=0$. This yields a contradiction, since $U-c$ and $v_{\pm}$ have a constant sign on $I_\pm$. This proves that $\phi(y_{c})\neq  0$.

        Next, since $\phi \in H^2(\mathbb{R})$ and
         solves \eqref{vp1}, we have
         $ {U'' \over U-c} \phi \in L^2$. Together with $\phi(y_{c}) \neq 0$, we must have $U''(y_{c})= 0$.
         Consequently, we have proven that if $c \in  ]U_{-}, U_{+}[$ is an embedded eigenvalue, we must have $c=U(y_{c})$ with $U''(y_{c})=0$.
          Since  we assume that $U''/U$ is strictly negative, we must also have $U(y_{c})=0$ and therefore the only
          remaining possibility  for an embedded eigenvalue is $c=0$. Going back to the expression of $H$ in \eqref{symm-vort},  
          we immediately see that $0$ is not an eigenvalue of $H$
           since $S$ is invertible thanks to Lemma \ref{lemSigma}.

 \subsection{Proof of Lemma \ref{lemmourre} }   
 We shall now turn to the proof of Lemma  \ref{lemmourre}.
Recall that $H = SUS$ with $S= (1 + m \Delta_{\alpha}^{-1} m )^{1 \over 2}$. Let us write   
   $$ H= U +  (S-1) U (S-1) + (S-1) U + U(S-1)$$
   in which we note that $S-1$ is a compact operator on $L^2$, upon noting that $(1+S)^{-1}$ is bounded, $m \Delta_\alpha^{-1}m$ is compact on $L^2$, and $S- 1=  m \Delta_{\alpha}^{-1}m (1 + S)^{-1}$. Take $A = i\partial_y$ as the conjugate operator. 
We obtain    $$ i[H, A] = U'  + K_{1} = F(U) + K_{1}$$
    with $F(U(y)) = U'(y)$ and $K_{1}$ a compact operator on $L^2$. 
    
    Let $I$ be a compact interval in $]U_-,U_+[$ and let $g$ be in $\mathcal{C}^\infty_{c} (]U_{-}, U_{+}[, \mathbb{R}_{+})$ with the support contained in $I$. 
We then take $\tilde I\subset ]U_{-}, U_{+}[$ to be a slightly bigger interval
     such that there exists a smooth $\tilde g$ with the support contained in $\tilde I $
      and $\tilde g=1$ on $I$.  Since $F$ is bounded below away from zero on the support of $\tilde g$, we get
      that there exists a positive constant $\theta_{I}$ such that
    \beq
    \label{mourre1} \tilde g(U)   i[H, A]  \tilde g(U) \geq \theta_{I} \tilde g(U)^2  +  \tilde g(U) K_{1}  \tilde g(U).
    \eeq
     We can then write
     \begin{multline*} g(H)  i[H, A]  g(H)=  g(H) \tilde g(H)   i[H, A] \tilde {g}(H) g(H) \\=
      g(H) \tilde g(U) i[H, A] \tilde {g}(U) g(H)  + g(H) \Bigl(( \tilde g(H) - \tilde g(U))  i[H, A] \tilde {g}(H)  +
       \tilde{g}(U)   i[H, A] (\tilde {g}(H) -g(U))\Bigr)g(H).
       \end{multline*}
       Thus, using \eqref{mourre1}, we get 
       $$   g(H) \tilde g(U) i[H, A] \tilde {g}(U) g(H)  \geq \theta_{I} g(H) \tilde{g}(U)^2 g(H)
        \geq \theta_{I} g(H)^2  + g(H)( \tilde g(U) ^2 - \tilde g(H)^2)  g(H).$$
         To conclude, it suffices to use that if $f\in \mathcal{C}^\infty_{c}(\mathbb{R})$, then
          $f (H) -f (U)$ is a compact operator.
     We refer to Lemma \ref{lemtech2}, ii).
 
\subsection{Local decay estimates}
  We shall now prove a propagation estimate that will be crucial for the proof of Theorem \ref{theomain1}. 
  \begin{lem}
  \label{lemprop} Let $I$ be a compact interval in $ ]U_{-}, U_{+}[$  such that Lemma \ref{lemmourre} holds. Let $J\subset I$ and $g_{J}$ be in $\mathcal{C}^\infty_{c} (]U_{-}, U_{+}[, \mathbb{R}_{+})$, having its support contained in $J$ and satisfying 
    \beq
    \label{hypprop}
     g_{J}(H) i [H, A] g_{J}(H) \geq { \theta_{I} \over 2} g_{J}(H)^2, 
    \eeq
with $\theta_I$ as in Lemma \ref{lemmourre}. Then, for every $k \in \mathbb{N}$, there exists a constant $C_k$ so that 
    \beq
    \label{estprop} \|  \langle A \rangle^{- k} g_{J}(H) \psi(t)  \| \leq {C_{k} \over \theta_{I}^{k+{1 \over 2}}} { 1 \over  1 + t^k} \|\langle A \rangle^k  g_{J}(H)\psi_{0}\|, 
\eeq
for every $t\ge 0$ and for every $\psi_{0} \in H^k$, where $\psi$ solves  \eqref{symm-vort}.
    \end{lem}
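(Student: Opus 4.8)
The plan is to run the standard Mourre-theory time-decay argument (as in \cite{Mourre,ABG,HSS}) adapted to our bounded self-adjoint $H$. First I would reduce to a differential inequality for the quantity
$$
f_k(t) := \big\| \langle A \rangle^{-k} g_J(H) e^{-itH}\psi_0 \big\|,
$$
proceeding by induction on $k$. The base case $k=0$ is trivial since $e^{-itH}$ is unitary and $g_J(H)$, $\langle A\rangle^0=\mathrm{Id}$ are bounded, giving $f_0(t)\le \|g_J(H)\psi_0\|$. For the inductive step the key device is the identity $\langle A\rangle^{-k} = \langle A\rangle^{-k}$ together with the propagation observable $\Phi(t) = e^{itH} g_J(H)\langle A\rangle^{-2k} g_J(H) e^{-itH}$, or more conveniently its "primitive" form. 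Concretely, set $\psi(t)=e^{-itH}\psi_0$ and $\phi(t)=g_J(H)\psi(t)$ (note $g_J(H)$ commutes with $e^{-itH}$, so $\phi$ also solves \eqref{symm-vort} with data $g_J(H)\psi_0$); then differentiate $t\mapsto \langle \phi(t), \langle A\rangle^{-2k}\phi(t)\rangle$ and use $\tfrac{d}{dt}\langle A\rangle^{-2k} $ along the flow $= i[H,\langle A\rangle^{-2k}]$. One commutes $\langle A\rangle^{-k}$ past $i[H,A]$ using \eqref{hypprop}: the positive commutator estimate localized by $g_J(H)$ produces the coercive lower bound $\tfrac{\theta_I}{2}\|\langle A\rangle^{-k}\phi\|^2$ after inserting suitable powers of $\langle A\rangle^{-1}$ and controlling the resulting commutators $[\langle A\rangle^{-1}, i[H,A]]$, which are bounded operators in our setting. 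This yields a Gronwall-type inequality of the schematic form
$$
\frac{d}{dt}\, \frac{1}{k}\,\| \langle A\rangle^{-k}\phi(t)\|^{?} \;\le\; -\,c\,\theta_I\, \|\langle A\rangle^{-(k+1/2)}\phi(t)\|^{?} + (\text{lower-order in }k),
$$
and feeding in the inductive hypothesis at level $k-1$ (applied to the data $\langle A\rangle^{\pm 1}$-shifted, or to $A\psi_0$) closes the loop with the extra gain of one power of $(1+t)^{-1}$ and the correct power of $\theta_I$.

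More precisely, the cleanest route is the iterated-commutator / "minimal escape velocity" scheme: one shows inductively that $B_k(t):=\langle A\rangle^{-k} g_J(H) e^{-itH}g_J(H)\langle A\rangle^{-k}$ satisfies $\|B_k(t)\|_{\mathcal L(L^2)} \le C_k \theta_I^{-2k-1}(1+t)^{-2k}$, by writing $\langle A\rangle^{-k} = \langle A\rangle^{-1}\cdot \langle A\rangle^{-(k-1)}$, using the Heisenberg derivative $\mathbf{D}(\langle A\rangle^{-2}) = i[H,\langle A\rangle^{-2}]$, and observing that $i[H,\langle A\rangle^{-2}] = -\langle A\rangle^{-2}\,(i[H,A])\,A\langle A\rangle^{-2} - \langle A\rangle^{-2} A\,(i[H,A])\,\langle A\rangle^{-2} + \text{bounded}$, all of whose factors are bounded because $[H,A]$ and $[[H,A],A]$ are bounded (as remarked in the paper after Lemma \ref{lemmourre}). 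Sandwiching with $g_J(H)$ and invoking \eqref{hypprop} gives the required sign, and an integration over $[0,t]$ combined with a standard interpolation (trading decay against the finite total "budget" $\int_0^\infty$ of the localized propagation observable) upgrades $L^1_t$ integrability to pointwise $(1+t)^{-2k}$ decay. Taking square roots and translating $B_k$ back to the vector estimate gives \eqref{estprop}; the explicit constant $C_k/\theta_I^{k+1/2}$ comes from the $k$-fold iteration, each step costing one factor $\theta_I^{-1}$ from the Mourre lower bound plus a final $\theta_I^{-1/2}$ from the last interpolation.

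The main obstacle — and the only place real care is needed — is the bookkeeping of the commutators $[\langle A\rangle^{-j}, i[H,A]]$ and $[\langle A\rangle^{-j}, [[H,A],A]]$ that appear when one tries to pull the weights $\langle A\rangle^{-k}$ inside the coercivity estimate \eqref{hypprop}: one must verify these are bounded with norms uniform in $\alpha$, and that the "error" terms they generate are genuinely of lower order in $k$ so the induction actually closes rather than reproducing itself. Here I would lean on the fact that $H = SUS$ with $U\in C^\infty$, $U'>0$, and $S-1$ compact and smooth in $\alpha$, so that $\mathrm{ad}_A^j(H)$ is bounded for every $j$ with $\alpha$-uniform bounds (this is where Assumptions {\bf(H1)}–{\bf(H3)} and the technical Lemma \ref{lemtech2} are used); a second, more cosmetic, subtlety is that \eqref{hypprop} is stated for the specific $g_J$, so one should not differentiate $g_J(H)$ at all — keeping it frozen on both sides throughout the computation, which is legitimate since it commutes with the flow. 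Once those boundedness facts are in hand the argument is the textbook Mourre propagation estimate and the rest is routine Gronwall/interpolation.
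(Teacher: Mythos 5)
There is a genuine gap. The mechanism you propose for extracting a sign — differentiating $\langle \phi(t),\langle A\rangle^{-2k}\phi(t)\rangle$, writing the Heisenberg derivative $i[H,\langle A\rangle^{-2k}]$ as (roughly) $-\langle A\rangle^{-2}\,(i[H,A])\,A\langle A\rangle^{-2}+\cdots$, and then ``sandwiching with $g_J(H)$ and invoking \eqref{hypprop} to get the required sign'' — does not work. The derivative of $\lambda\mapsto\langle\lambda\rangle^{-2k}$ is odd up to a positive factor: the factor $A\langle A\rangle^{-2}$ changes sign at $A=0$, so the Mourre lower bound \eqref{hypprop}, which gives positivity of $g_J\,i[H,A]\,g_J$, cannot produce a definite sign for $i[H,\langle A\rangle^{-2k}]$. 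Moreover, even if one did obtain a schematic inequality $\frac{d}{dt}\|\langle A\rangle^{-k}\phi\|^2\le -c\theta_I\|\langle A\rangle^{-(k+1/2)}\phi\|^2+\cdots$, integrating it yields only an integrated (time-averaged) local-decay bound, not the pointwise $(1+t)^{-k}$ decay; the step you call ``standard interpolation trading decay against the finite total budget'' is exactly the hard part and is not supplied. Your induction on $k$ also does not close as written, because the commutator errors you call ``lower order in $k$'' are not: they reproduce terms of the same strength.

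The device that makes the argument work — and which is the heart of the paper's proof, following \cite{HSS} — is a \emph{time-dependent, translated and dilated monotone cutoff}. One takes $\chi(\xi)=\tfrac12(1-\tanh\xi)$, so $\chi'=-\phi^2\le 0$, and the observable $\chi(A_{t,s})$ with $A_{t,s}=\tfrac1s(A-a-\theta t)$, $\theta=\theta_I/4$. Then
$$\frac{d}{dt}\|\chi^{1/2}g_J\psi\|^2=\frac{\theta}{s}\|\phi g_J\psi\|^2+\langle i[H,\chi]g_J\psi,g_J\psi\rangle,$$
and the commutator expansion (Lemma \ref{lemcom1}) turns the second term into $-\tfrac1s\langle i[H,A]\phi g_J\psi,\phi g_J\psi\rangle$ plus errors of size $O(s^{-2})\|\phi g_J\psi\|^2+O(s^{-p})\|g_J\psi\|^2$. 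Here the sign \emph{is} definite: since $\chi'=-\phi^2$ has a fixed sign, \eqref{hypprop} gives $\tfrac1s(\theta-\tfrac{\theta_I}{2})\|\phi g_J\psi\|^2\le 0$ for $s$ large. Integrating gives near-monotonicity of $\|\chi^{1/2}(A_{t,s})g_J\psi(t)\|^2$ up to an $O(ts^{-p})$ error, and the algebraic decay is then extracted \emph{not} by Gronwall but by choosing $a=-\theta_I t/8$ and $s\sim(\theta_I t)^{1/2}$ at the end: on the support of $\chi^{1/2}(A_{t,s})$ at time $0$ one has $A\lesssim-\theta_I t/16$ up to exponentially small tails, so $\|\chi^{1/2}g_J\psi_0\|\lesssim t^{-k}\|\langle A\rangle^k g_J\psi_0\|$, while $1-\chi^{1/2}(A_{t,s})$ at time $t$ is supported where $A\gtrsim\theta_I t/16$, so $\langle A\rangle^{-k}(1-\chi^{1/2})$ has operator norm $\lesssim(\theta_I t)^{-k}$. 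No induction on $k$ is needed; $k$ enters only in this final support comparison. Your proposal names the right references and correctly notes that $g_J(H)$ should be kept frozen and that all iterated commutators $\mathrm{ad}_A^jH$ are bounded, but it is missing the moving cutoff, which is the idea that actually produces both the sign and the $t^{-k}$ decay.
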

   
\begin{proof} Take $\chi(\xi)= {1 \over 2} (1 - \tanh \xi)$ and observe that $\chi$ has the property that 
\beq
\label{propchi}  \chi'= - \phi^2, \qquad |\phi^{(m)} (\xi) | \leq C_{m} \phi(\xi) , \qquad \forall \xi \in \mathbb{R}, \, \,
\forall m \in \mathbb{N}^*
\eeq
where $\phi = 1/(\sqrt2 \cosh \xi)$. Following the method of \cite{HSS}, we shall use a localized energy estimate. Set $A_{t,s}= \frac1s ( A - a - \theta t )$ for $A = i\partial_y$, $a \in \mathbb{R}$, $s \geq 1$ and
 $\theta = {\theta_{I} \over 4}.$ In what follows, $\chi$ and $\phi$ stand for
 $\chi(A_{t,s})$, $\phi(A_{t,s})$, respectively, and $g_{J}$ for $g_{J}(H)$. These are self-adjoint  operators on $L^2$, and $g_{J}$
  commutes with $H$. In addition, all the estimates are uniform in $a$  and $s \geq 1$, and 
  they do not depend on the subinterval $J$. 
  
  Using the equation \eqref{symm-vort} and symmetry properties, we observe that 
 \beq
 \label{est1} {d \over dt}  \|  \chi^{1\over 2} g_{J} \psi \|^2= {d \over dt } \langle \chi g_{J}
  \psi, g_{J} \psi\rangle= { \theta \over s} \|  \phi g_{J} \psi \|^2
    + \langle i[H,\chi] g_{J} \psi, g_{J} \psi \rangle.
  \eeq
  To evaluate the right hand side, we use the commutation formula from \cite{HSS,Golenia,GerardC}, which we recall in Lemma \ref{lemcom1}. For every $p\ge 1$, we get 
    \beq
  \label{com1}  \langle   i[ H,\chi]  f, f \rangle= - {1 \over s} \langle \phi^2 i [H,A] f,  f \rangle   +   \sum_{j=2}^{p-1} {1 \over j!} {1 \over s^j} \langle  \chi^{(j)}
   i \,ad_{A}^jH f, f \rangle  +{1 \over s^p} \langle R_{p}f, f \rangle
   \eeq
   with $ad_AH = [H,A]$, $ad_A^jH = [ad_A^{j-1}H,A]$, and  
      $$\|R_{p}\| \leq C_{p} \| ad_{A}^pH \| \leq C_{p}$$
   where $\| \cdot \|$ stands here for the operator norm from $L^2$ to $L^2$.
   In the next computation, we continue to denote by $R_{p}$ any bounded operator which is bounded
   by a harmless constant.
    For the first term on the right of \eqref{com1}, we use again the commutation formula to get
   $$\begin{aligned} & { 1 \over s } \langle \phi^2 i [H,A] f,  f \rangle= {1 \over s}\langle \phi i[H,A] f, \phi f \rangle
   \\&=
  {1 \over s } \langle i  [H,A] \phi f, \phi f\rangle +  \sum_{j=1}^{p-2}  { 1 \over j!} { 1 \over s^{j+1}} \langle \phi^{(j)} 
     i \,ad_{A}^{j+1} Hf, \phi f \rangle  + {1 \over s^{p}} \langle R_{p}f, f \rangle.
     \end{aligned}$$
    For the terms in the above  sum, we can use repeatedly the commutation formula to get in the end that
    $$  { 1 \over s } \langle \phi^2 i [H,A] f,  f \rangle= {1 \over s}\langle i[H,A] \phi  f, \phi f \rangle 
    + \sum_{j=1}^{p-2}{ 1 \over s^{j+1}} \sum_{k,l} \langle R_{k} \phi_{l}f, \phi f\rangle + {1 \over s^{p}} \langle R_{p}f, f \rangle$$
    where in the above sum $k,\,l$ runs in finite sets and  $\phi_{l}$ stands for  some  derivatives of $\phi$, which in particular satisfies the estimate
     $|\phi_{l}| \lesssim |\phi|$ by using \eqref{propchi}. 
     
     In a similar way, to estimate the other terms in \eqref{com1},  we observe that $\chi^{(j)} = -(\phi^2)^{(j-1)}$ can be expanded as a sum of terms under the form
     $ \phi_{k} \tilde \phi_{m}$ where $\phi_{k}$, $\tilde \phi_{m}$ and their derivatives are controlled by $\phi.$
      By using again the commutation formula as many times as necessary, this allows to write
       an expansion under the form
      $$  \sum_{j=2}^{p-1} {1 \over j!} {1 \over s^j} \langle  (\phi^2)^{(j-1)}
   ad_{A}^jH f, f \rangle=  \sum_{j=2}^{p-1}  { 1 \over s^j} \sum_{k,l,m} \langle R_{k} \phi_{l} f, \,  \tilde  \phi_{m}f
    \rangle + { 1 \over s^p } \langle R_{p}f, f\rangle.$$
    In particular, we get from \eqref{com1} and the above expansion formula that for every $f$
     (assuming $s \geq 1$)
 \beq
 \label{RHS1}
  \langle   i[ H,\chi]  f, f \rangle  \leq - { 1 \over s} \langle i[H,A] \phi f , \phi f\rangle
    + { C_{p}  \over s^2} \| \phi f \|^2 + {C_{p} \over s^p }  \|f\|^2.
 \eeq
 From \eqref{est1}, we thus find that
\begin{align*}  {d \over dt}  \|  \chi^{1\over 2} g_{J} \psi  \|^2  & \leq
 {1 \over  s} \Big (\theta \|\phi g_{J} \psi\|^2 -   \langle i[H,A] \phi g_{J}\psi , \phi g_{J} \psi\rangle\Big ) 
  +  {C_{p}  \over s^2 }\| \phi g_{J} \psi \|^2 + {C_{p} \over s^p }  \|g_{J} \psi\|^2  \\
   & \leq { 1 \over s} \left( \theta - {\theta_{I} \over 2}\right)   \| \phi g_{J} \psi \|^2+ 
    {C_{p}  \over s^2} \| \phi g_{J} \psi \|^2 + {C_{p} \over s^p }  \|g_{J} \psi\|^2 
    \end{align*}
    where we have used \eqref{hypprop} in the last inequality. 
     Consequently, we can choose $\theta = \theta_{I}/4$ and $s$ sufficiently large
     ($ s\geq  { 16C_{p} \over  \theta_{I}}$) to obtain
   \beq\label{est-loc} {d \over dt} \|  \chi^{1\over 2} g_{J} \psi(t) \|^2   \leq {C_{p} \over s^p }  \|g_{J} \psi (t)\|^2
    \leq {C_{p} \over s^p }  \|g_{J} \psi_{0}\|^2\eeq
    upon using  
    $$ {d \over dt } \|g_{J}(H) \psi\|^2= 0.$$
     Integrating \eqref{est-loc} between $0$ and $t$ and recalling $\chi = \chi(A_{t,s})$, we find that for every $t$, 
   $$ \left\| \chi^{1 \over 2} \left( {A - a - \theta t \over s} \right)g_{J} \psi(t) \right\|^2
    \leq  \left\| \chi^{1 \over 2} \left( {A - a  \over s}\right) g_{J} \psi_{0} \right\|^2 +  {C_{p} t \over s^p } \|g_{J} \psi_{0}\|^2$$
uniformly for all $a \in \mathbb{R}$ and $s \geq 1$, with $\theta = {\theta_{I} \over 4}$. In particular for $\theta_{I}t \geq { 1 \over \theta_{I}^2}$,  we can take $s= C_{p} (\theta_{I} t)^{ 1 \over 2}$ and
  $a =  -{ \theta_{I} \over 8} t$ to obtain
  \beq
  \label{est2} \left\| \chi^{1 \over 2} \left( {A -  {\theta_{I}  \over 8} t \over C_{p}(\theta_{I} t)^{1 \over 2}} \right)g_{J} \psi(t) \right\|^2
    \leq  \left\| \chi^{1 \over 2} \left( {A  +  { \theta_{I} \over 8} t  \over  C_{p}(\theta_{I} t)^{1 \over 2}}\right) g_{J}\psi_{0} \right\|^2 +   { C_p \over \theta_{I} (\theta_{I}t)^{{p \over 2} - 1 } }\|g_{J} \psi_{0}\|^2.
    \eeq 

  To conclude, for $k\ge 0$, we write that
  \begin{equation}\label{est-finalX} \| \langle A\rangle^{-k} g_{J} \psi(t) \| \leq
    \left\| \langle A\rangle^{-k} \chi^{1 \over 2} \left( {A -  {\theta_{I}  \over 8} t \over C_{p}(\theta_{I} t)^{1 \over 2}} \right) g_{J} \psi(t)\right\| + \left\| \langle A\rangle^{-k} \left(1- \chi^{1 \over 2} \left( {A -  {\theta_{I}  \over 8} t \over C_{p}(\theta_{I} t)^{1 \over 2}} \right) \right) g_{J} \psi(t)\right\| .
    \end{equation}
Let us first estimate the second term on the right. 
By using $\|g_{J}(H) \psi (t)\|
     =  \|g_{J}(H) \psi_{0}\|$, it suffices to bound in the operator norm
    $$  \left\| \langle A\rangle^{-k} \left(1- \chi^{1 \over 2} \left( {A -  {\theta_{I}  \over 8} t \over C_{p}(\theta_{I} t)^{1 \over 2}} \right) \right) \right\| \lesssim { 1 \over (\theta_{I} t )^k }.$$
    Indeed, the estimate is clear, when $A \geq \theta_{I}t /16$, due to the factor $\langle A\rangle^{-k} $. In the case when $A \leq \theta_{I}t /16$, we observe that $1-\chi^{1 \over 2}$ term can be bounded by 
     $e^{-   C(\theta_{I} t)^{1 \over 2}}$, which is again bounded by the algebraic decay. 
     
     Let us now bound the first term on the right of \eqref{est-finalX}. 
     Using \eqref{est2} and choosing $p$ sufficiently large, we thus get
  $$ \| \langle A\rangle^{-k} g_{J} \psi(t) \| \lesssim 
       \left\|  \chi^{1 \over 2} \left( {A  +  { \theta_{I} \over 8} t  \over  C_{p}(\theta_{I} t)^{1 \over 2}}\right) \langle A\rangle^{-k} \langle A\rangle^{k} g_{J} \psi_{0} \right\|  + { 1 \over {\theta_{I}^{1 \over 2}}} { 1 \over (\theta_{I}t)^k}  \|g_{J} \psi_{0}\|.
     $$
In the above, the first term on the right is bounded by $C_p (\theta_{I} t )^{-k} \|\langle A\rangle^{k} g_{J} \psi_{0}\|$ by considering $A\le -\theta_I t/16$ and $A\ge -\theta_I t/16$ and using the fact that $\chi(\xi)$ decays exponentially to zero as $\xi \to +\infty$.

          Thus, we have obtained 
           $$  \theta_{I}^{k+{ 1 \over 2 }} \| \langle A\rangle^{-k} g_{J} \psi(t) \| \lesssim {1 \over  1 + t^k} \| \langle A \rangle^k g_{J}
        \psi_{0}\|$$ for $\theta_{I}^3t \geq 1$. 
        The estimate for $\theta_I^3t \leq 1$ is clear. The lemma follows.
        \end{proof}

   \subsection{Proof of Theorem \ref{theomain1}}
   \label{finEuler}
    Let us take $I_{0}$ any closed interval included in $]U_{-}, U_{+}[$ and take $I$ such that  $I_{0} \subset \mathring{I}$ 
    and that Lemma \ref{lemmourre} holds. In particular, for every point $E \in I_{0}$ and every positive number $\delta$, we can take $g_{E, \delta}$
     a smooth function supported in $]E-2 \delta , E+ 2 \delta[$ and equal to one on $[E- \delta, E+ \delta].$ 
      For $g_{E, \delta}(H)$ and for $\delta$ small enough so that $]E-2 \delta , E+ 2 \delta[ \subset I$, Lemma \ref{lemmourre} yields 
$$
g_{E, \delta}(H) i [H, A] g_{E, \delta}(H) \geq \theta_{I} g_{E, \delta}(H)^2 + g_{E, \delta}(H) K g_{E, \delta}(H). $$
Let us show that
we can take $\delta$
       sufficiently small such that \eqref{hypprop} holds for $ g_{E, \delta}(H)$. Indeed, since $K$ is compact, we can approximate
       it by a finite rank operator in the operator norm. Thus, it suffices to prove that for every $\eps>0$, 
       we get 
        $g_{E, \delta} K g_{E, \delta} \geq - \eps g_{E, \delta}^2  $, for sufficiently small $\delta$ 
        and for $ K= a\otimes b$ a rank one operator. In this case, we then have
      $$ \langle g_{E, \delta} K g_{E, \delta} f, f \rangle= 
         \langle  g_{E, \delta} f , a\rangle \langle g_{E, \delta} f, b\rangle, $$
         and therefore by Cauchy-Schwarz
       $$ \langle g_{E, \delta} K g_{E, \delta} f, f \rangle \geq - \|g_{E, 2 \delta} a\| \|g_{E, 2\delta}b\|\,  \|g_{E, \delta} f\|^2$$
       where $g_{E, 2 \delta}$ is a smooth function supported in $]E-4 \delta , E+ 4 \delta[$
        that is one on the support of $g_{E, \delta}$.
          The result  follows by using that for  $c= a, \, b \in L^2$, thanks to the spectral measure, we can write
          $$   \| g_{E,  2 \delta} c\|^2 =  \int_{[U_{-}, U_{+}]}  |g_{E,  2 \delta}(\lambda)|^2  \langle dE_{\lambda}c, c\rangle$$
          and  by using the Lebesgue theorem, upon noting that the measure  $ \langle dE_{\lambda}c, c\rangle$ is continuous, 
         thanks to Lemma  \ref{lemspectrum}. 
         This proves that  \eqref{hypprop}, and hence, \eqref{estprop} hold for $J = ]E-2 \delta , E+ 2 \delta[$. 
   
       Finally, we can cover $I_{0}$ by a finite  number of such intervals $J$ with $\overline{J} \subset I$ sufficiently small  such that
        \eqref{hypprop} holds. Take a partition of unity associated to this covering of $I_{0}$. For each
         $J$, the estimate \eqref{estprop} holds (noting that the constants in the estimate are independent
          of $J$).  Taking an initial data under the form $g_{I_{0}} (H) \psi_{0}$ supported in $I_{0}$, we can
then  sum the estimate to obtain the final result, Theorem \ref{theomain1}.  Note however that the constants in the final estimate do 
           depend on $I_{0}$ and might blow up at the edges of the spectrum of $H$.

\section{Viscous case}
\label{proofNS}
We shall now prove Theorem \ref{theomain2} and Theorem \ref{theomain3}.
 We use the form \eqref{NSpsi} of the equation.
  To estimate the remainder $R$ defined as in \eqref{Rpsidef}, 
 we can use  again that both $S, S^{-1}$ are bounded operators and 
 \begin{align*}
 &   S- 1 = m \Delta_{\alpha}^{-1}m (1+S)^{-1} =  m \Delta_{\alpha}^{-1}m (   1 + ( 1 +  m \Delta_{\alpha}^{-1}m)^{\frac12})^{-1}.
 \end{align*}
Thus, in view of \eqref{Rpsidef}, we can write  
\beq
\label{Rnu1}
R = R^0+ \partial_y R^1, \qquad 
\| R^0\| + \| R^1\| \leq C_{R} 
\eeq
for some constant $C_{R}$ that is independent of $\nu$.

\subsection{Basic energy estimate}

 As a preliminary, we first establish that
 
  \begin{prop}
  \label{propNS1}
  There are positive constants $M_{0}, C$ such that for every $\nu \in (0, 1]$, 
  the solution of \eqref{NSpsi} satisfies the  estimates
  \beq
  \label{0visc}   \| \psi(t) \| \leq e^{ M_{0}\nu t } \|\psi_{0}\|, 
   \qquad \nu \int_{0}^t \| \nabla_\alpha\psi \|^2 \leq \|\psi_{0}\|^2 ( 1  + C\nu t e^{ 2M_{0} \nu t})
   \eeq 
   uniformly for all $t\ge 0$ and $\alpha\in  \ZZ$. Here, $\nabla_{\alpha}=(\partial_{y}, i\alpha)^T$. 
  \end{prop}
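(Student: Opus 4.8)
The plan is to prove Proposition~\ref{propNS1} by a standard $L^2$ energy estimate for \eqref{NSpsi}, using three structural facts: the transport part $i\alpha H$ is skew-adjoint for the real scalar product (because $H$ is bounded and symmetric), the viscous term has a favorable sign, and by \eqref{Rnu1} the remainder $\nu R$ is a first-order perturbation with operator-norm coefficients bounded independently of $\nu$.

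First I would take the real $L^2$ scalar product of \eqref{NSpsi} with $\psi$. Since $H$ is symmetric and bounded, $\langle i\alpha H\psi,\psi\rangle=0$, while an integration by parts gives $-\nu\langle\Delta_\alpha\psi,\psi\rangle=\nu\|\nabla_\alpha\psi\|^2$ (no boundary terms on $\RR$). This yields the energy identity
$$ \frac12\frac{d}{dt}\|\psi\|^2+\nu\|\nabla_\alpha\psi\|^2=\nu\langle R\psi,\psi\rangle . $$
Next I would bound the right-hand side with the splitting $R=R^0+\partial_y R^1$ of \eqref{Rnu1}: the zeroth-order part contributes at most $\nu C_R\|\psi\|^2$, and for the first-order part one integrates by parts to move the derivative onto $\psi$, $\langle\partial_y R^1\psi,\psi\rangle=-\langle R^1\psi,\partial_y\psi\rangle$, which is bounded by $C_R\|\psi\|\,\|\partial_y\psi\|\le C_R\|\psi\|\,\|\nabla_\alpha\psi\|$. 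A Young inequality absorbs $\tfrac{\nu}{2}\|\nabla_\alpha\psi\|^2$ into the left-hand side and leaves
$$ \frac{d}{dt}\|\psi\|^2+\nu\|\nabla_\alpha\psi\|^2\le 2M_0\,\nu\|\psi\|^2, \qquad M_0:=C_R+\tfrac12 C_R^2 . $$
Gr\"onwall's lemma gives the first bound $\|\psi(t)\|\le e^{M_0\nu t}\|\psi_0\|$. Integrating the same differential inequality in time, dropping $\|\psi(t)\|^2\ge0$, and inserting the pointwise bound $\|\psi(s)\|^2\le e^{2M_0\nu t}\|\psi_0\|^2$ for $s\le t$ under the integral yields
$$ \nu\int_0^t\|\nabla_\alpha\psi\|^2\le \|\psi_0\|^2+2M_0\,\nu t\,e^{2M_0\nu t}\|\psi_0\|^2, $$
which is the second bound with $C=2M_0$. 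All constants depend only on $C_R$, hence are uniform in $\nu\in(0,1]$ and in $\alpha\in\ZZ$ (for $\alpha=0$ the $H$-term is simply absent, and $\alpha$ otherwise enters only through the skew-adjoint term and the nonnegative summand $\alpha^2\|\psi\|^2$ in $\|\nabla_\alpha\psi\|^2$).

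There is essentially no substantive obstacle here; the only points deserving a word of justification are that the formal computations above are first carried out for smooth solutions — the equation \eqref{NSpsi} is, apart from the bounded operators $i\alpha H$ and $\nu R$, the heat equation, so $\psi(t)\in H^\infty(\RR)$ for $t>0$ whenever $\psi_0\in L^2$, and the identities then pass to the limit — and that the integrations by parts produce no boundary terms because the functions decay at infinity. Thus the whole proof is a short Gr\"onwall argument once the energy identity is written down.
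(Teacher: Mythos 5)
Your proof is correct and is essentially identical to the paper's: a real $L^2$ energy identity using the symmetry of $H$, the splitting $R=R^0+\partial_y R^1$ from \eqref{Rnu1} with an integration by parts and Young's inequality to absorb the gradient term, followed by Gr\"onwall and a time integration. No gaps.
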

  Note that the above estimates are uniform in $\alpha$. In addition, when $\alpha$ is large enough, the estimates can be improved in the sense that we could take  $M_{0}=0$. However, we shall not use the improvement.

  \begin{proof}
  The proposition is an easy consequence of the fact that $H$ is symmetric.
  Indeed, taking integration by parts and using  \eqref{NSpsi} and \eqref{Rnu1}, we obtain that
  $$ {1 \over 2} {d \over dt} \| \psi\|^2 + \nu \| \nabla_{\alpha} \psi\|^2  \leq C \nu ( \| \psi \|^2 +  \|\psi \|\, \|\partial_{y}\psi\|).$$
  Using the Young inequality, we thus get 
  \begin{equation}\label{basicEE}  {d \over dt} \| \psi \|^2 + \nu \| \nabla_{\alpha} \psi\|^2  \leq C \nu  \| \psi \|^2.\end{equation}
The first estimate in \eqref{0visc} follows from the Gronwall inequality, while the second is obtained by integrating in time the above inequality. 
     \end{proof} 
    \subsection{Proof of Theorem \ref{theomain2}}
    We proceed as in the proof of Theorem \ref{theomain1}.
  We first choose $I_{0}$ and $I$ as in Section \ref{finEuler} and cover $I_{0}$ with a finite number of  small 
  intervals such that on each  small interval the estimate {\eqref{estprop} holds. 
   Let us take $J$ to be any of these small intervals. We now proceed as in the proof of Lemma \ref{lemprop}
    by computing
    $$ {d \over dt} \| \chi^{1\over 2} g_{J} \psi \|^2$$
    with $g_{J}= g_{J}(H)$ and
    $$ \chi= \chi(A_{\alpha t, \alpha s}), \qquad  A_{\alpha t, \alpha s}=  {A - a - \theta \alpha t \over \alpha s}.$$
    Note that the only difference here is that we have replaced $t$ and $s$ by $\alpha t$ and $\alpha s$, since we
    did not perform the change of the time scale as in \eqref{change time}. We again focus on $\alpha>0$.
   As similarly done in \eqref{est1}, we obtain for the solution $\psi(t)$ to \eqref{NSpsi} 
   \begin{equation}
   \label{C0}
\begin{aligned}  &   {d \over dt}  \|  \chi^{1\over 2} g_{J} \psi\|^2= {d \over dt } \langle \chi g_{J}
  \psi, g_{J} \psi\rangle 
  \\&={ \theta \over s} \|  \phi g_{J} \psi \|^2
    +  \langle i\alpha [ H,\chi] g_{J} \psi, g_{J} \psi \rangle      +2 \nu \langle  \chi g_{J} \Delta_{\alpha} \psi, g_{J} \psi \rangle + 2\nu \langle \chi g_{J} R \psi, g_{J} \psi\rangle .
      \end{aligned}\end{equation}
We now estimate each term on the right. The first two terms are estimated  exactly as done in the proof of Lemma \ref{lemprop} or precisely in \eqref{RHS1}, yielding 
\beq
\label{CH} { \theta \over s} \|  \phi g_{J} \psi \|^2
    +  \langle i\alpha [ H,\chi] g_{J} \psi, g_{J} \psi \rangle    \leq  {1 \over s} \left( \theta  - {\theta_{I} \over 2 } + {C_{p} \over \alpha s}\right) \| \phi g_{J} \psi \|^2 + 
 { \alpha C_{p} \over (\alpha s)^p } \|g_{J} \psi \|^2
 \eeq
 where $C_{p}$ is independent of $\alpha$ and $s$ (and $\nu$, of course).
 Next for the third term on the right of \eqref{C0}, we can integrate by parts  (observe that $\chi$ commutes with $\partial_y$) to obtain
 \begin{align*}
   \langle  \chi g_{J} \Delta_{\alpha} \psi, g_{J} \psi \rangle
   & = - \| \nabla _{\alpha} \chi^{1\over 2} g_{J} \psi \|^2 +
  \langle  \chi^{1 \over 2} [g_{J}, \partial_{y}] \partial_{y} \psi, \chi^{1 \over 2} g_{J} \psi \rangle 
   - \langle \chi^{1 \over 2} [g_{J}, \partial_{y}] \psi, \chi^{1 \over 2} \partial_{y} g_{J} \psi  \rangle \\
    & =  - \| \nabla _{\alpha} \chi^{1\over 2} g_{J} \psi \|^2 + \langle \chi^{1 \over 2} \left[ [g_{J}, \partial_{y} ], \partial_{y}\right] \psi, 
     \chi^{1 \over 2} g_{J} \psi \rangle -  2 \langle \chi^{1 \over 2} [g_{J}, \partial_{y}] \psi, \chi^{1 \over 2} \partial_{y} g_{J} \psi  \rangle .
  \end{align*}
   Using the Lemma \ref{lemtech2} i) to estimate the commutators, we find
  $$
    \langle  \chi g_{J} \Delta_{\alpha} \psi, g_{J} \psi \rangle   \leq   - \| \nabla _{\alpha} \chi^{1\over 2} g_{J} \psi \|^2 + C \| \psi \| (  \|\chi^{1 \over 2 }g_{J} \psi \| +  \| \partial_{y} \chi^{1 \over 2 }g_{J} \psi \|) .$$
   In a similar way, using the decomposition \eqref{Rnu1} and integrating by parts, we get 
 $$  \langle \chi g_{J} R \psi, g_{J} \psi\rangle  \leq C \| \psi \|  (  \| \partial_{y} \chi^{1 \over 2 }g_{J} \psi \|
    + \|\chi^{1 \over 2 }g_{J} \psi \|).$$
Using $\|  \chi^{1 \over 2 }g_{J}\|\lesssim 1$ and the Young inequality, we thus obtain 
   \beq
   \label{Cnu}
    \langle  \chi g_{J} \Delta_{\alpha} \psi, g_{J} \psi \rangle  +  \langle \chi g_{J} R \psi, g_{J} \psi\rangle   \leq   -{ 1 \over 2 } \| \nabla _{\alpha} \chi^{1\over 2} g_{J} \psi \|^2 + C \| \psi \|^2
      \eeq
      for some constant $C$ that is independent of $\nu$.
      
 Consequently, putting \eqref{CH} and \eqref{Cnu} into \eqref{C0}, and choosing again $\theta = \theta_{I} /4$
 and $s$ large so that $\alpha s\ge 4C_p/\theta_I$, we obtain 
 \beq
 \label{visc10} 
 \begin{aligned}  {d \over dt}  \|  \chi^{1\over 2} g_{J} \psi\|^2
 &\leq  {\alpha C_{p} \over( \alpha s)^p}
  \|g_{J} \psi \|^2 + C \nu \|\psi \|^2 
 \leq {\alpha C_{p} \over( \alpha s)^p}
  \|g_{J} \psi \|^2  + C\nu  e^{2 M_{0} \nu t} \|\psi_{0}\|^2 
\end{aligned}  \eeq
  where the last estimate comes from Proposition \ref{propNS1}. 
  On the other hand, using the same  commutator estimates as above (now with $\chi =1$), we also get that
  \beq
  \label{gJdecay}  {d \over dt} \|  g_{J} \psi \|^2  + \nu \| \nabla_{\alpha} g_{J} \psi  \|^2 \leq C \nu \|\psi \|^2 \le  C\nu  e^{2 M_{0} \nu t} \|\psi_{0}\|^2
  \eeq
which, after an integration in time, yields 
%
$\|g_{J} \psi (t) \| \leq \|g_{J} \psi_{0}\| + C (\nu t)^{1 \over 2} e^{M_{0} \nu t} \| \psi_{0}\|.$
Hence, the inequality \eqref{visc10} now becomes  
\beq
 \label{visc10-X} 
 \begin{aligned}  {d \over dt}  \|  \chi^{1\over 2} g_{J} \psi\|^2
 \leq {\alpha C_{p} \over( \alpha s)^p}
  \|g_{J} \psi_0\|^2  + C\nu  e^{2 M_{0} \nu t} \|\psi_{0}\|^2 .
\end{aligned}  \eeq

Finally, for times $t$ such that $\theta^3_{I} \alpha t \geq 1$, we integrate \eqref{visc10-X} over $(0,t)$ and take
   $ \alpha s= C_{p} (\alpha \theta_{I} t)^{1 \over 2} $ and $ a= -{ \theta_{I} \alpha t \over 8}$. Recalling $\chi= \chi(A_{\alpha t, \alpha s})$, we obtain
\begin{multline*}
   \left\| \chi^{1 \over 2} \left( {A -  {\theta_{I}  \over 8} \alpha  t \over C_{p}(\theta_{I}\alpha t)^{1 \over 2}} \right)g_{J} \psi(t) \right\|^2
    \leq  \left\| \chi^{1 \over 2} \left( {A  +  { \theta_{I} \over 8} \alpha t  \over  C_{p}(\theta_{I} \alpha t)^{1 \over 2}}\right) g_{J}\psi_{0} \right\|^2 + { C_{p} \alpha t\over  (\theta_{I} \alpha t)^{{p \over 2} } }\|g_{J} \psi_{0}\|^2
     +  C \nu t e^{2 M_{0} \nu t} \|\psi_{0}\|^2.
     \end{multline*}
 Note that in this estimate $C_{p}$ is independent of $J$ and $\theta_{I}$, while $C$ might depend on the compact intervals $I_0$ and $I$. 
 From this estimate, we easily deduce in the same way   as done in the proof of  Lemma \ref{lemprop}
  that
  $$  \theta_{I}^{k+{ 1 \over 2 }} \| \langle A\rangle^{-k} g_{J} \psi(t) \| \lesssim {1 \over  (\alpha t)^k} \| \langle A \rangle^k g_{J}
        \psi_{0}\| +  C_{N} (\nu t)^{1 \over 2} e^{M_{0} \nu t} \|\psi_{0}\|^2,$$
for times $t$ so that  $\theta^3_{I} \alpha t \geq 1$. When  $\theta^3_{I} \alpha t \le 1$, the estimate is clear. Thus, summing up over a finite number of such small intervals $J$, we complete the proof.
 \subsection{Proof of the enhanced dissipation, Theorem \ref{theomain3}}
 We again consider a finite number of small intervals $J$ covering $I_{0}$  as above so that 
     \beq
    \label{hypprop-re}
     g_{J}(H) i [H, A] g_{J}(H) \geq { \theta_{I} \over 2} g_{J}(H)^2, 
    \eeq
for $g_{J}\in \mathcal{C}^\infty_{c} (]U_{-}, U_{+}[, \mathbb{R}_{+})$ with support contained in $J$. 
We shall estimate $g_J(H) \psi$, with $\psi$ solving \eqref{NSpsi}. We compute   
\beq
\label{eqgjpsi}
\partial_{t} g_{J} \psi + i \alpha H g_{J} \psi - \nu \Delta_{\alpha}g_{J} \psi = 
  \nu \mathcal{C}_{\nu}\psi
 \eeq
 where $ \mathcal{C}_{\nu} = \mathcal{C}_{\nu}^0 + \partial_{y} \mathcal{C}_{\nu}^1$, 
 with
 \begin{align*}
   \mathcal{C}_{\nu}^0 =\left[ [g_{J}, \partial_{y}], \partial_{y}\right] + g_{J}R^0 + [g_J,\partial_y] R^1,  \qquad  \mathcal{C}_{\nu}^1
 = 2  [g_{J}, \partial_{y}]  +  g_{J} R^1,
 \end{align*}
 Here, $R^0,R^1$ are as in \eqref{Rnu1}. In particular, from the commutator estimates, we obtain
  \beq
  \label{comnufin}
  \| \mathcal{C}_{\nu}^0\| + \| \mathcal{C}_{\nu}^1 \| \leq C .
  \eeq


Take again $A= i \partial_{y}$. The starting point is to compute
 $$ 
 \begin{aligned} -{ 1 \over 2} {d \over dt }\langle A g_{J} \psi, \alpha g_J\psi \rangle
  &= - \alpha \langle  \partial_{t} g_{J} \psi, A g_{J}  \psi \rangle
   \\&= \alpha^2 \langle i H  g_{J} \psi, A g_{J} \psi \rangle -  \nu \alpha  \langle \Delta_{\alpha} g_{J} \psi , Ag_{J} \psi\rangle
    - \nu  \alpha \langle \mathcal{C}_{\nu} \psi, A g_{J} \psi \rangle. 
    \end{aligned}$$ 
 The crucial term in the above identity is the first one on the right hand-side. Indeed, thanks to \eqref{hypprop-re}, we have
 $$  \alpha^2 \langle i H  g_{J} \psi, A g_{J} \psi \rangle   = - { 1 \over 2} \alpha^2
  \langle i[H, A]  g_{J}\psi,  g_{J}\psi \rangle  \leq - { \theta_{I} \over 4} \alpha^2 \| g_{J} \psi \|^2.$$
  For the viscous terms on the right hand-side, we estimate 
  $$  \nu |\alpha| | \langle \Delta_{\alpha} g_{J} \psi , Ag_{J} \psi\rangle| 
   \lesssim  \nu  \|\Delta_\alpha g_{J} \psi \|\| \alpha\partial_{y} g_{J} \psi \|$$
   and after an integration by parts
   $$ \nu  |\alpha| \, |\langle \mathcal{C}_{\nu} \psi, A g_{J} \psi \rangle|
    \lesssim \nu  \| \alpha \psi  \| \, (\| \partial_{y} g_{J} \psi \| + \| \partial_{y}^2 g_{J}\psi \|).$$ 
 This yields 
\begin{equation}\label{key1}
 \begin{aligned} - {d \over dt } & \langle A g_{J} \psi, \alpha g_J\psi \rangle + { \theta_{I} \over 2}  \| \alpha g_{J} \psi \|^2
 \\ &\lesssim \nu  \|\Delta_\alpha g_{J} \psi \|\| \alpha \partial_{y} g_{J} \psi \| 
  + \nu  \| \alpha \psi  \| \, (\| \partial_{y} g_{J} \psi \| + \| \partial_{y}^2 g_{J}\psi \|). 
    \end{aligned}\end{equation}

Next, using \eqref{gJdecay}, we have 
 \beq
 \label{gJ2}
{d \over dt}  \| \alpha g_{J} \psi(t) \|^2 + \nu \| \alpha \nabla_{\alpha} g_{J} \psi (t) \|^2  \leq  C\nu \|\alpha \psi\|^2.
 \eeq

It remains to estimate $\|A g_{J} \psi \|^2$. Similarly as done above, we get 
$$ \frac12 {d \over dt } \|A g_{J} \psi \|^2
 + \nu  \| \nabla_{\alpha} \partial_{y} g_{J}\psi \|^2
  \lesssim \nu   ( \| \psi \| + \|\partial_{y} \mathcal{C}_{\nu}^1 \psi \|) \|\partial_{yy} g_{J} \psi \| 
   + \alpha \| [H,A] g_{J} \psi \| \| \partial_{y} g_{J}\psi \| $$
 in which 
   $$  \| [H,A] g_{J} \psi \| \lesssim \|g_{J}\psi \|, \quad   \|\partial_{y} \mathcal{C}_{\nu}^1 \psi \| \lesssim 
    \|\psi \| + \| \partial_{y} \psi \|.$$
Thus, using the Young inequality, we obtain
   \beq
   \label{gJ4}  {d \over dt }  \|A g_{J} \psi \|^2
 + \nu  \| \nabla_{\alpha} \partial_{y} g_{J}\psi \|^2
  \lesssim \nu ( \|\psi \|^2 + \| \partial_{y}\psi \|^2) + \|\alpha g_J \psi\| \| \partial_y g_J \psi\|.
  \eeq

  To conclude, we shall combine the estimates \eqref{gJdecay}, \eqref{key1}, \eqref{gJ2}, and \eqref{gJ4}
   in a suitable way. We introduce 
    $$ Q(t)= \Gamma^4 ( \| g_{J}\psi (t) \|^2 +  \|\alpha g_{J} \psi (t) \|^2 )
      -  \Gamma  \nu^{1 \over 3 }   \langle A g_{J} \psi, \alpha g_{J} \psi \rangle + \nu^{2 \over 3} \|A g_{J} \psi  (t)\|^2$$
       where $\Gamma\geq 1$ is a large parameter (independent of $\nu$ and $\alpha$) that we will choose later.
        We first observe that if $\Gamma$ is sufficiently large, $Q(t)$ is equivalent to a weighted
         $H^1$ norm. Namely,   
       $$ Q(t) \approx \|g_{J}\psi (t)\|^2 + \|\alpha g_{J} \psi(t)\|^2 +  \nu^{2 \over 3} \| \partial_{y} g_{J} \psi(t)\|^2.$$
We now add up the estimates \eqref{gJdecay}, \eqref{key1}, \eqref{gJ2}, and \eqref{gJ4} with the corresponding weight as in $Q(t)$ and use the Young inequality to obtain  
\begin{equation}\label{key2-ineq}    {d \over dt } Q(t) + c_{0} \nu^{1 \over 3}  Q(t)
  \le C_0 \nu e^{ 2 M_{0}  \nu t } ( \|\psi_{0}\|^2 + \|\alpha \psi_0\|^2)+ C_0  \nu^{\frac53} \| \partial_{y}\psi \|^2\end{equation}
  for some positive constants $C_0,c_0$. Indeed, the left hand side is clear, upon recalling that $|\alpha|\ge 1$. Let us check the right hand side. 
 In view of \eqref{key1}, we estimate  
  $$
  \begin{aligned}  \Gamma \nu^{\frac43}  \|\Delta_\alpha g_{J} \psi \| \| \alpha \partial_{y} g_{J} \psi \| & \le \Gamma^{-1} \nu^{\frac53}  \| \partial_y^2 g_{J} \psi \|^2 + \nu \|\alpha^2 g_J \psi\|^2 + C_0 \Gamma^2 \nu (\Gamma + \nu^{\frac23}) \| \alpha \partial_{y} g_{J} \psi \| ^2
  \\
 \Gamma  \nu^{\frac43}  \| \alpha \psi  \| \, (\| \partial_{y} g_{J} \psi \| + \| \partial_{y}^2 g_{J}\psi \|) 
& \le \Gamma^{-1} \nu^{\frac53}  \| \partial_y^2 g_{J} \psi \|^2 + \nu \|\partial_y g_J \psi\|^2 + C_0 \Gamma^2 \nu (\Gamma + \nu^{\frac23}) \| \alpha \psi \| ^2
\end{aligned}  $$
in which each term on the right, except the last term involving $\|\alpha\psi\|^2$, is absorbed into the left hand side, precisely the corresponding viscous term, of \eqref{gJ4}, \eqref{gJ2}, and \eqref{gJdecay}, upon taking $\Gamma$ large enough. Similarly, in view of \eqref{gJ4}, we estimate 
$$ \nu^{\frac23}\|\alpha g_J \psi\| \| \partial_y g_J \psi\| \lesssim \nu^{\frac13} \|\alpha g_J \psi \|^2 + \nu \|\partial_y g_J \psi\|^2$$
which is again controlled by the left hand side of \eqref{key1} and the viscous term in \eqref{gJdecay}, respectively. Thus, we have obtained 
$$
 {d \over dt } Q(t) + c_{0} \nu^{1 \over 3}  Q(t)
  \lesssim  \nu ( \|\psi \|^2 + \|\alpha \psi\|^2) + \nu^{\frac53}\| \partial_{y}\psi \|^2 .$$
This yields \eqref{key2-ineq}, upon using \eqref{0visc}.

   Finally, we integrate the differential inequality \eqref{key2-ineq} and use \eqref{0visc} again to obtain
     $$ Q(t) \leq e^{ - c_{0} \nu^{1 \over 3 } t} Q(0) + C_0 ( \nu^{2 \over 3}  + C  \nu t e^{2 M_{0} \nu t})( \|\psi_{0}\|^2 + \|\alpha \psi_0\|^2).$$  
Theorem \ref{theomain3} follows.

  \section{Technical lemmas}
   \label{sectiontech} 
  In this section, we shall recall some commutator estimates used 
   throughout the paper. These results can be found, for instance, in \cite{GerardC, Golenia, HSS}.
    The main idea is to use the Helffer-Sj\"ostrand formula to express the functional calculus of a self-adjoint
    operator.
   
   Let us start with almost analytic extensions. Let us introduce $S^\rho$ for $\rho \in \mathbb{R}$ the
   set of $\mathcal{C}^\infty$ functions on $\RR$ such that  
   $$ |f^{(m)}(x)| \leq C_{m} \langle x \rangle^{\rho-m},\qquad \forall~x\in \RR, \quad \forall~ m \in \mathbb{N}.$$
We also set 
   $$ \|f\|_{\rho}= \sup_{x\in \RR, \, m\in \NN} \langle x \rangle^{m-\rho} |f^{(m)}(x)|.$$
   An almost analytic extension of $f$  is a function $\tilde f$  on $\mathbb{C}$ such that 
 \begin{equation}  \begin{aligned}
   \label{propftilde}
   & \tilde f_{\mathbb{R}}= f, \\
  &  \mbox{supp } \tilde f \subset\Big\{ x+iy, \, |y| \leq 2 \langle x \rangle, \, x \in \mbox{supp }f \Big\}, \\
  & | \partial_{\overline{z}} \tilde f(z)| \leq C \langle x \rangle^{\rho - N -1}|y|^N
   \end{aligned}
   \end{equation}
   for some $N$ fixed and large enough.
   As an example, one can take   $$\tilde{f}(x+ i y)=\left( \sum_{r=0}^Nf^{(r)} (x)  { (iy)^r \over r !} \right) \chi\left({y \over \langle x \rangle}\right)$$
   where $\chi(s)$ is a smooth function which is equal to $1$ for $|s| \leq 1$ and $0$ for $|s| \geq 2$.
   
   Now, let $T$ be a self-adjoint operator. For any $f\in S^\rho$, we define the operator $f(T)$ 
   by   \beq
    \label{HS}
     f(T) = \lim_{R\rightarrow + \infty} {i \over 2 \pi} \int_{\mathbb{C} \cap |Re \, z| \leq R}
      \partial_{\overline{z}}\tilde f(z) (z- T)^{-1} \, dL(z)
      \eeq
      where $dL(z)= dx dy $ is the Lebesgue measure on $\mathbb{C}$  identified to $\mathbb{R}^2$.
      Observe that when $\rho<0$,  the above  integral converges in the operator norm.
  
  \begin{lem}
  \label{lemcom1}\cite{GerardC, Golenia,HSS}
  For $k\ge 1$, let $f \in S^\rho$ with $\rho<k$, and let $B$ be a bounded self-adjoint operator on $L^2$ such that the iterated commutators
   $ad_{T}^jB$, $j \leq k$, are also bounded. Then, there holds the expansion 
  $$ [B,f(T)]=  \sum_{j=1}^{k-1} {1 \over j!} f^{(j)}(T)ad_{T}^jB + R_{k}(f,T,B)$$
  with 
  $$ \|R_{k}(f,T,B)\| \leq C_k(f) \|ad_{T}^kB\|$$
  where $C_k(f)$ depends only on $k$ and $\|f\|_\rho$.
  \end{lem}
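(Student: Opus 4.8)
The plan is to prove the commutator expansion $[B,f(T)]=\sum_{j=1}^{k-1}\frac{1}{j!}f^{(j)}(T)\,ad_T^jB+R_k(f,T,B)$ with $\|R_k\|\le C_k(f)\|ad_T^kB\|$ by working from the Helffer--Sj\"ostrand representation \eqref{HS} of $f(T)$ and commuting $B$ through the resolvent. First I would record the elementary resolvent identity $[B,(z-T)^{-1}]=(z-T)^{-1}[B,T](z-T)^{-1}$, and more generally its iterated form: repeatedly commuting $B$ past the resolvent produces the finite Neumann-type expansion
\begin{equation*}
[B,(z-T)^{-1}]=\sum_{j=1}^{k-1}(z-T)^{-1}\big(ad_T^jB\big)(z-T)^{-j}+(z-T)^{-1}\big(ad_T^kB\big)(z-T)^{-1}[\,\cdots\,](z-T)^{-1},
\end{equation*}
where the last term carries $k$ resolvent factors and one factor of $ad_T^kB$; the hypothesis that $ad_T^jB$ is bounded for $j\le k$ is exactly what makes every term here a bounded operator. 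I would isolate the main term of order $j$ and the genuine remainder $r_k(z)$ obeying $\|r_k(z)\|\le C_k\|ad_T^kB\|/|\mathrm{Im}\,z|^{k}$ (after using $\|(z-T)^{-1}\|\le 1/|\mathrm{Im}\,z|$ repeatedly).

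Next I would substitute this into \eqref{HS}: since $\partial_{\bar z}\tilde f$ is smooth and compactly supported in $x$, the integral $\frac{i}{2\pi}\int_{\mathbb C}\partial_{\bar z}\tilde f(z)\,[B,(z-T)^{-1}]\,dL(z)$ splits into a sum of $k-1$ terms of the form $\frac{i}{2\pi}\int_{\mathbb C}\partial_{\bar z}\tilde f(z)\,(z-T)^{-1}(ad_T^jB)(z-T)^{-j}\,dL(z)$ plus the remainder integral. For the $j$-th term I would observe that $(z-T)^{-1}(ad_T^jB)(z-T)^{-j}$ can be rearranged, again using the resolvent identity, up to lower-order (higher-remainder) corrections, into $(ad_T^jB)(z-T)^{-j-1}$ times combinatorial constants; and then invoke the standard fact that $\frac{i}{2\pi}\int_{\mathbb C}\partial_{\bar z}\tilde f(z)\,(z-T)^{-m}\,dL(z)=\frac{(-1)^{m-1}}{(m-1)!}f^{(m-1)}(T)$, which is just the Helffer--Sj\"ostrand formula for $f^{(m-1)}$ up to the factorial. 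Matching the combinatorics yields precisely the coefficient $\frac{1}{j!}$ in front of $f^{(j)}(T)\,ad_T^jB$. The convergence of these integrals in operator norm when $\rho<k$ (for the main terms one needs $\rho<j$, which holds since $j\le k-1$) follows from the decay estimate $|\partial_{\bar z}\tilde f(z)|\le C\langle x\rangle^{\rho-N-1}|y|^N$ in \eqref{propftilde} together with the resolvent bound, exactly as in the scalar case $B=1$.

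For the remainder term I would bound $\|R_k(f,T,B)\|\le \frac{1}{2\pi}\int_{\mathbb C}|\partial_{\bar z}\tilde f(z)|\,\|r_k(z)\|\,dL(z)\lesssim \|ad_T^kB\|\int_{\mathbb C}\langle x\rangle^{\rho-N-1}|y|^{N}\,|y|^{-k}\,dL(z)$, and this integral is finite provided $N$ in \eqref{propftilde} is chosen large enough relative to $k$ (one takes $N\ge k$, say $N=k+1$, and the $\langle x\rangle^{\rho-N-1}$ decay in $x$ together with the compact $x$-support handles the $x$-integration). This gives $C_k(f)$ depending only on $k$ and finitely many of the seminorms $\|f\|_\rho$ entering the construction of $\tilde f$. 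The step I expect to require the most care is the bookkeeping: the ``lower-order corrections'' generated when rearranging $(z-T)^{-1}(ad_T^jB)(z-T)^{-j}$ into $(ad_T^jB)(z-T)^{-j-1}$ themselves involve commutators $ad_T^{j+1}B,\dots,ad_T^kB$ and must be tracked so that they all land either in a higher main term or in the single $O(|y|^{-k})$ remainder, and so that the final combinatorial constant is exactly $1/j!$; this is a standard but slightly tedious induction on $k$. Since the statement explicitly references \cite{GerardC, Golenia, HSS}, I would at this point simply note that the full combinatorial argument is carried out in those references and cite them for the details rather than reproduce every manipulation.
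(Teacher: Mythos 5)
The paper does not prove Lemma~\ref{lemcom1} at all: it is quoted verbatim from the literature, with the proof deferred to \cite{GerardC, Golenia, HSS}. Your sketch is the standard Helffer--Sj\"ostrand argument that those references carry out, and it is sound in outline: commute $B$ through the resolvent in \eqref{HS} via the identity $[B,(z-T)^{-1}]=(z-T)^{-1}(ad_TB)(z-T)^{-1}$, iterate, use $\frac{i}{2\pi}\int\partial_{\bar z}\tilde f(z)\,(z-T)^{-j-1}\,dL(z)=\frac{(-1)^j}{j!}f^{(j)}(T)$ for the main terms, and estimate the remainder integral. Since you, like the paper, ultimately cite the references for the combinatorics, this is an acceptable treatment.

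Two small corrections to the quantitative part. First, the fully commuted remainder carries $k+1$ resolvent factors around $ad_T^kB$ (one on the left, $k$ on the right), so the pointwise bound is $\|r_k(z)\|\lesssim \|ad_T^kB\|\,|\mathrm{Im}\,z|^{-(k+1)}$, not $|\mathrm{Im}\,z|^{-k}$. This off-by-one matters: with $|\partial_{\bar z}\tilde f|\leq C\langle x\rangle^{\rho-N-1}|y|^N$ and the support condition $|y|\leq 2\langle x\rangle$, the $y$-integration of $|y|^{N-k-1}$ gives $\langle x\rangle^{N-k}$, and then $\int\langle x\rangle^{\rho-k-1}\,dx<\infty$ precisely when $\rho<k$ --- which is how the stated threshold arises; with your $|y|^{-k}$ count you would only reach $\rho<k-1$. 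Second, elements of $S^\rho$ are \emph{not} compactly supported (the paper's definition only imposes symbol-type decay), so you cannot invoke ``compact $x$-support'' to close the $x$-integration; it is the $\langle x\rangle^{\rho-N-1}$ decay combined with the cone condition on $\mathrm{supp}\,\tilde f$ that does the work, as above. Neither point affects the applications in the paper, where $f$ is either compactly supported or Schwartz-like, but they are worth getting right if you intend the lemma at the stated generality.
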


In addition, we also use the following:
   \begin{lem}
   \label{lemtech2} Let $f \in \mathcal{C}^\infty_{c}(\mathbb{R)}$, $A= i \partial_{y}$, and $H$ be the bounded and self-adjoint operator defined as in \eqref{symm-vort}. 
    Then, we have
    \begin{enumerate}
    \item[(i)] $ [A, f(H)]$ is a bounded operator.
    \item [(ii)] $f(H)- f(U)$ is a compact operator.
      \end{enumerate}
      \end{lem}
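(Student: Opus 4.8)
The statement to be proved is Lemma~\ref{lemtech2}: for $f\in\mathcal C^\infty_c(\mathbb R)$, (i) $[A,f(H)]$ is bounded, and (ii) $f(H)-f(U)$ is compact, where $A=i\partial_y$, $U$ is multiplication by $U(y)$, and $H=SUS$ with $S=(1+m\Delta_\alpha^{-1}m)^{1/2}$. The uniform vehicle for both parts is the Helffer--Sj\"ostrand formula \eqref{HS}: writing $\tilde f$ for an almost analytic extension of $f$ (which now has compact $x$-support, so the integral converges in operator norm and also absolutely after one or two resolvent expansions),
$$
f(H)=\frac{i}{2\pi}\int_{\mathbb C}\partial_{\bar z}\tilde f(z)\,(z-H)^{-1}\,dL(z),
$$
and similarly for $f(U)$. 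The whole lemma then reduces to controlling $[A,(z-H)^{-1}]$ and $(z-H)^{-1}-(z-U)^{-1}$ with bounds that are integrable against $|\partial_{\bar z}\tilde f(z)|\le C\langle x\rangle^{-N-1}|y|^N$; since $\mathrm{Im}\,z=y$ on the support of $\partial_{\bar z}\tilde f$ and $(z-H)^{-1},(z-U)^{-1}$ are $O(1/|y|)$, any bound that costs at most a fixed negative power of $|y|$ (here $|y|^{-2}$) will be integrable once $N$ is taken large enough.

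\textbf{Part (i).} I would use the resolvent identity for commutators,
$$
[A,(z-H)^{-1}]=(z-H)^{-1}[A,H](z-H)^{-1},
$$
so it suffices to know that $[A,H]=[i\partial_y,SUS]$ is a bounded operator on $L^2$. This is exactly the computation already carried out in the proof of Lemma~\ref{lemmourre}: expanding $H=U+(S-1)U(S-1)+(S-1)U+U(S-1)$ and using $S-1=m\Delta_\alpha^{-1}m(1+S)^{-1}$, every commutator that appears — $[i\partial_y,U]=iU'$, $[i\partial_y,\Delta_\alpha^{-1}]=0$, $[i\partial_y,m]=im'$ and the derivatives of $m$ controlled by $m$ via \textbf{(H2)} — is bounded, so $i[H,A]=F(U)+K_1$ with $K_1$ bounded (indeed compact). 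With $\|[A,H]\|<\infty$ in hand, $\|[A,(z-H)^{-1}]\|\le \|[A,H]\|\,/|\mathrm{Im}\,z|^2=\|[A,H]\|/|y|^2$, and integrating against $|\partial_{\bar z}\tilde f|$ over the compactly-supported region (with $N\ge 2$) gives $\|[A,f(H)]\|<\infty$. One must also justify that $[A,f(H)]$, defined a priori as a form or on a core such as $\mathcal S(\mathbb R)$, genuinely extends to the bounded operator produced by the integral; this is routine since $A$ has a dense core of analytic vectors and the HS integral converges in operator norm.

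\textbf{Part (ii).} Here the second resolvent identity gives
$$
(z-H)^{-1}-(z-U)^{-1}=(z-H)^{-1}(H-U)(z-U)^{-1},
$$
and $H-U=(S-1)U(S-1)+(S-1)U+U(S-1)$ is a \emph{compact} operator, because $S-1=m\Delta_\alpha^{-1}m(1+S)^{-1}$ is compact on $L^2$ (noting $m\to0$ at infinity by \textbf{(H2)}, so $m\Delta_\alpha^{-1}m$ is compact, and in fact trace class when $m\in L^2$) while $U$ and $(1+S)^{-1}$ are bounded. Hence the integrand in the HS representation of $f(H)-f(U)$ is compact for each $z$, with norm bounded by $\|H-U\|_{\mathrm{op}}/|y|^2$ which is integrable against $|\partial_{\bar z}\tilde f|$; since the compact operators form a norm-closed ideal, the Bochner integral $f(H)-f(U)$ is compact.

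\textbf{Main obstacle.} The routine-looking but genuinely load-bearing point is the \emph{boundedness of $[A,H]$} and, more generally, the control of all the commutators of $S$ and $S^{-1}$ with $\partial_y$ — this is where assumption \textbf{(H2)} ($|m^{(k)}|\le C_k m$, $m\in L^2$, decay of $m$) does the work, ensuring that differentiating $m\Delta_\alpha^{-1}m$ produces only factors bounded by $m$ and hence bounded operators, and that $S-1$ is not merely bounded but compact. Everything else — the HS formula, the resolvent identities, the $|y|^{-2}$ estimates, the choice of $N$ — is mechanical once that structural fact about $H$ is in place, and indeed it has already been established in the proof of Lemma~\ref{lemmourre}, so the proof of Lemma~\ref{lemtech2} can simply invoke it.
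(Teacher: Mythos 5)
Your proposal is correct and follows essentially the same route as the paper: both parts are handled via the Helffer--Sj\"ostrand formula, with $[A,(z-H)^{-1}]=(z-H)^{-1}[A,H](z-H)^{-1}$ and the boundedness of $[A,H]$ for (i), and the second resolvent identity together with the compactness of $H-U$ and the norm-closedness of the ideal of compact operators for (ii). The extra detail you supply on the $|{\rm Im}\,z|^{-2}$ bounds and the source of the boundedness of $[A,H]$ is consistent with, and slightly more explicit than, the paper's argument.
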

      \begin{proof}
      We start with proving (i). Thanks to \eqref{HS} for $f(H)$, we have
      $$ [ A, f(H) ]={i \over 2 \pi} \int_{\mathbb{C}}  \partial_{\overline{z}} \tilde f(z) \, (z-H)^{-1} [A, H] (z-H)^{-1}  dL(z), $$
      with $\tilde f$ an almost analytic extension of $f$.
      Since $[A, H]$ is bounded, the result follows directly from the facts that $f$ is compactly supported and that the integral converges in the operator
      norm thanks to \eqref{propftilde}.
      
      Let us next prove ii). In a similar way, we write
      $$ f(H)- f(U)= {i \over 2 \pi}\int_{\mathbb{C}}  \partial_{\overline{z}} \tilde f(z) \,( (z-H)^{-1} - (z-U)^{-1}) \, dL(z).$$
      Since $H= U+K$ with $K$ compact, the above yields 
 $$  f(H)- f(U)= {i \over 2 \pi}\int_{\mathbb{C}}  \partial_{\overline{z}} \tilde f(z) \,(z-H)^{-1}
    K (z-U)^{-1}\, dL(z).$$   
   Again, the integral converges in the operator norm, since  $(z-H)^{-1}
    K (z-U)^{-1}$  is a compact operator for every $z \notin \mathbb{R}$. The result follows.
           \end{proof}

\subsection*{Acknowledgement} Part of this work was done when the first three authors were visiting ICERM, Brown University, T. Nguyen was visiting Universit\'e Paris-Sud at Orsay under a visiting professorship, and T. Nguyen and A. Soffer were visiting CCNU (C.C. Normal Univ.)  Wuhan, China. The authors thank the institutions for their hospitality and the support. A.S. is partially supported by NSF grant DMS 01600749 and NSFC 11671163.


\bibliographystyle{abbrv}

\end{document}